\documentclass[11pt]{article} 
 
\usepackage{latexsym} 
\usepackage{amsfonts} 
\usepackage{amsmath} 
\usepackage{amsthm} 
\usepackage{mathrsfs} 
\usepackage{dsfont} 
\usepackage{bbold} 
\usepackage[english]{babel} 
\usepackage{caption} 
\usepackage{epsfig} 
\usepackage{float} 
\usepackage{subfigure} 
\usepackage{psfrag} 
\usepackage{graphicx} 
\usepackage{epsfig} 
\usepackage{amsbsy} 
\usepackage{hyperref,color}

\DeclareMathOperator{\meas}{meas} 
\DeclareMathOperator{\dist}{dist}
 
\newtheorem{theorem}{Theorem} 
\newtheorem*{prop*}{Theorem} 
\newtheorem{theo}[theorem]{Theorem} 
 
\newtheorem{defi}[theorem]{Definition} 
\newtheorem{lemma}[theorem]{Lemma} 
 
\newtheorem{rmk}[theorem]{Remark}

\newcommand{\zerarcounters}{\setcounter{equation}{0}\setcounter{theorem}{0}} 
 
\newcommand{\pa}[1]{{\left(#1\right)}}

\newcommand{\norm}[1]{{\left \|#1\right \|}}
 
\newcommand{\ZZZ}{\mathds{Z}} 
\newcommand{\CCC}{\mathds{C}} 
\newcommand{\NNN}{\mathds{N}} 
 
\newcommand{\RRR}{\mathds{R}} 
\newcommand{\TTT}{\mathds{T}} 
\newcommand{\uno}{\mathds{1}} 
 
\newcommand{\CCCC}{{\mathcal C}} 
\newcommand{\DD}{{\mathcal D}}

\newcommand{\calH}{{\mathcal H}}

\newcommand{\LL}{{\mathcal L}} 
\newcommand{\MM}{{\mathcal M}} 
\newcommand{\NN}{{\mathcal N}}

\newcommand{\RR}{{\mathcal R}} 
 
\newcommand{\TT}{{\mathcal T}}

\newcommand{\matF}{{\mathscr F}} 
\newcommand{\matG}{{\mathscr G}} 
\newcommand{\matH}{{\mathscr H}}

\newcommand{\matP}{{\mathscr P}}

\newcommand{\matT}{{\mathscr T}} 
\newcommand{\matU}{{\mathscr U}}

\newcommand{\und}{\underline}

\newcommand{\ttV}{\mathtt{V}}
\newcommand{\ttW}{\mathtt{W}}

\newcommand{\Fullbox}{{\rule{2.0mm}{2.0mm}}} 
 
\newcommand{\EP}{\hfill\Fullbox\vspace{0.2cm}} 
\newcommand{\prova}{\noindent{\it Proof. }} 
\newcommand{\io}{\infty} 
\newcommand{\e}{\varepsilon} 

\newcommand{\al}{\alpha} 
\newcommand{\de}{\delta}

\newcommand{\m}{\mu}

\newcommand{\g}{\gamma} 
\newcommand{\om}{\omega}

\newcommand{\la}{\lambda} 
\newcommand{\f}{\varphi} 
\newcommand{\s}{\sigma} 
 
\newcommand{\del}{\partial}

\newcommand{\av}[1]{\langle #1 \rangle}

\newcommand{\ii}{{\rm i}}

\newcommand{\set}[1]{{\left\{#1\right\}}}

\newcommand{\jap}[1]{\langle #1 \rangle}

\def\ins#1#2#3{\vbox to0pt{\kern-#2 \hbox{\kern#1 #3}\vss}\nointerlineskip} 
 
\begin{document}
 
\title{\bf Long time stability for Lagrangian tori in infinite dimensional lattice systems}
 
\author{\textbf{Livia Corsi, Michela Procesi}\\ 
\small Dipartimento di Matematica e Fisica, Universit\`a Roma Tre, Roma, 
00146, Italy\\
\footnotesize e-mail: livia.corsi@uniroma3.it, michela.procesi@uniroma3.it}

\date{} 
 
\maketitle

\begin{abstract} 
We consider a mechanical system of infinitely many rotators weakly interacting with each other and prove that
the infinite dimensional invariant tori provided by \cite{CGP} are Lagrangian and stable over
subexponentially long time.

\smallskip

\noindent\textbf{MSC classification}:~37K55; 70H08.
\end{abstract} 

 \tableofcontents

\zerarcounters 
\section{Introduction} 
\label{intro} 

In the theory of dynamical system it is natural to ask whether invariant manifolds
exist, are stable in some sense and, if yes, over what time. In particular, in the case of finite dimensional
close-to-integrable Hamiltonian systems it is well known that most Lagrangian tori survive any
perturbation small
enough and they are stable for exponentially long time. However such results do not
translate directly to the infinite dimensional setting, since the smallness assumption on the
perturbation as well as the stability time depend heavily on the dimension and do not pass
to the limit. For instance in \cite{Wa} is proved that the persistence of invariant tori for a $d$-lattice
with short range interaction
is guaranteed provided the size of the perturbation goes to zero as $d$ goes to infinity with
a quantitative control on such a dependence. This is coherent with the results of \cite{boka},
where the authors prove
the absence of long time stability for size-$\e$ hamiltonian perturbations of quasi-convex 
integrable systems, when the dimension $2d$ of the phase space becomes of order $\log(1/\e)$:
in fact here they construct diffusive orbits.

On the other hand such results may be bypassed by requiring that the perturbation has some special
structure, as seen for instance in \cite{BFG,po}.

 Very recently it has been proved in \cite{CGP} the existence a positive measure
set of invariant tori for a special class of close-to-integrable Hamiltonian system on the lattice $\ZZZ$.
Roughly, the special class consists of Hamiltonians where the integrable part is quadratic w.r.t.
the actions with some appropriate ``twist'' condition, while the perturbation depends on the angles
alone with some quantitative and rather strong analyticity condition:
this model encodes all the key difficulties of KAM theory, as it has been shown already in finite
dimension in \cite{gal}.

The idea in \cite{CGP} can be described as follows. If one tries to prove the 
existence of infinite dimensional invariant tori by constructing a convergent sequence of
finite dimensional tori whose dimension increases, one is stuck with the problem that the 
Diophantine condition gets worse and worse as the dimension increases; this may be
overcome by requiring stronger and stronger analyticity condition on the perturbation: the more
angles are involved, the stronger the analyticity required.
On the other hand in \cite{CGP} it is shown that one may work directly in the infinite dimensional
context by using the Bourgain infinite dimensional Diophantine vectors and encoding the
strong analyticity w.r.t.~the angle with an appropriate functional setting: once such a setting
has been established, the technique in \cite{gal} applies essentially unchanged.
Being a more direct approach, the technique used in \cite{CGP} allows to prove that
the set of invariant tori has relatively large probability measure.

The aim of the present paper is to show that the tori of \cite{CGP} are in fact stable for
subexponentially long time; see Theorem \ref{stimadeitempivera}.

Our proof is made of two main steps.

The first step consists in showing that the invariant tori constructed in \cite{CGP}
are lagrangian and admit adapted symplectic coordinates. More precisely we show that
there exists a symplectic change of variables so that the invariant torus,
and in fact the Hamiltonian, has a very simple expression; see below for details.
 The main idea to obtain
such a result is to generalize to the infinite dimensional context the result in \cite{h}.
This has been done in \cite{BB} for finite dimensional isotropic invariant tori
for PDEs. The main difference here is that our tori are infinite dimensional, and in fact
lagrangian instead of simply isotropic: however we take strong advantage from the fact
that our phase space is the product of an infinite dimensional
torus and a sequence space; see \eqref{spazio}.
Note that at the formal level such a result can be obtained as in the finite dimensional case;
the delicate part is to have quantitative bounds so that the change of variables is well posed
and preserves the regularity of the Hamiltonian.

The second step is to reformulate a standard non-resonant averaging argument (see for instance
\cite{giorgilli}) to the infinite dimensional context: here we take strong advantage of the 
bounds on the small divisors and ultraviolet cutoffs proved in \cite{MP,CMP}. The stability time increases
to infinity as either the size of the perturbation or the distance from the invariant torus goes to zero.
As customary, we first prove that we may perform $N$ averaging steps in a neighbourhood of the invariant 
torus whose size goes to zero as $N$ goes to infinity; then we optimize over $N$ and obtain a 
subexponential stability time. 

It is worth mentioning that, compared with the $2d$-dimensional case where the stability time is 
$\sim e^{(1/\e)^{1/(2d-1)}}$, $\e$ being the size of the perturbation, here we only obtain
subexponential time $\sim e^{(\log(1/\e))^{4/3}}$ due to the much worse small divisors.

Note that
a similar estimate is obtained in \cite{FG,BMP1} for the stability, in the Gevrey norm, of the elliptic fixed point at
zero of the NLS equation with external parameters. In the context of Hamiltonian PDEs, the closest
result to the one in the present paper is in \cite{Y}, where the authors prove polynomial stability
for the infinite dimensional KAM tori provided in \cite{Bjfa}.

\zerarcounters 
\section{Setting and main result} 
\label{setup} 

Consider the set 
\begin{equation}\label{spazio}
\matP:=\TTT^\ZZZ \times \ell_q^\io,
\end{equation}
where 
\[
\ell^\io_q=\ell^\io_q(\RRR):=\{ I\in \ell^2(\RRR)\,:\, \|I\|_q:=\sup_{j\in\ZZZ}\jap{j}^q|I_j| <\io \}
\]
is a Banach space based on $\ell^\io(\RRR)$ with $\jap{j}:=\max\{1,|j|\}$ and $q>1$,
while the torus $\TTT^\ZZZ$ is endowed with the metric
\[
\dist(\theta,\theta') := \sup_{j\in\ZZZ}\inf_{k\in\ZZZ}|\theta_j - \theta_j' - 2k\pi|,
\]
which makes it a Banach manifold based on $\ell^\io(\RRR)$.
Note that at each point $p\in\matP$ the tangent space is given by
\begin{equation}\label{tipi}
\TT:=T_p\matP= \ell^\io(\RRR) \times \ell^\io_q(\RRR),
\end{equation}
endowed with the norm
\[
\|X\|:= \max\{\|X^{(\theta)}\|_\io,\|X^{(I)}\|_q\},
\]
and we write
\begin{equation}\label{fibrato}
T\matP:=\bigcup_{p\in\matP} T_p\matP.
\end{equation}

\begin{defi}\label{domini}
	Given $I_0\in\ell_q^\io$, we set
	\[
	\matP_{r}(I_0):=\TTT^\ZZZ\times B_r(I_0;\ell^\io_q)\,.
	\]
	where 
	\[
		B_r(I_0;\ell^\io_q):= \{ I \in \ell^\io_q \, : \, \|I-I_0\|_q <r\}
	\]
It shall be conventient to denote $B_r(\ell^\io_q):=B_r(0;\ell^\io_q)$ and similarly $\matP_{r}=\matP_{r}(0)$.
\end{defi}

Set also
\begin{equation}\label{fin}
\begin{aligned}
\ZZZ^\ZZZ_f&:= \{ \nu\in\ZZZ^\ZZZ \, : \, \|\nu\|_{\ell^1}<\io\},\\
\NNN^\ZZZ_f&:= \{ k\in\NNN^\ZZZ \, : \, \|k\|_{\ell^1}<\io\},
\end{aligned}\qquad\qquad
|\nu|_{\star}:= \sum_{j\in\ZZZ}\sqrt{\jap{j}}|\nu_j|.
\end{equation}
%

\begin{defi}\label{bestie}
	Given any Banach space $(\ttV,\|\cdot\|_{\ttV})$ and any $r,s>0$,
	we say that a function $f:\matP_r \to \ttV$ is $(r,s)$-analytic if
	it represented by a totally convergent  Lindstedt series
	\begin{equation}\label{formal}
	f(\theta,I) = \sum_{k\in\NNN^\ZZZ_f}\sum_{\nu\in\ZZZ^\ZZZ_f} f_{k,\nu} I^ke^{\ii \theta\cdot \nu},\qquad\qquad 
		I^k :=\prod_{j\in\ZZZ}I_j^{k_j}
	\end{equation}
	with the condition
	\begin{equation}\label{converge!}
		\|f\|_{r,s,\ttV} := \sum_{k\in\NNN^\ZZZ_f}\sum_{\nu\in\ZZZ^\ZZZ_f} \|f_{k,\nu}\|_{\ttV} e^{s|\nu|_\star}(r w)^k <\io
	\end{equation}
	where $w:=\{\jap{j}^{-q}\}_{j\in\ZZZ}$. 
	If $f$ depends on the angles only we may say that it is $s$-analytic,
	whereas if it depends on the actions only we may say that it is $r$-analytic: in such a case
	we drop the extra index in the norm. 
Finally, given $I_0\in \ell^\infty_q$, setting $f_{I_0}(\theta,I) := f(\theta,I_0+I)$ we say that $f$ is $(r,s)$-analytic 
at $I_0$ if $f_{I_0}$ is $(r,s)$-analytic.
\end{defi}

\begin{rmk}\label{rappresento}
Note that since $\ell^\io_q$ is not separable, Definition \ref{bestie} is in fact quite stronger 
then classical analyticity. In fact, not only there exist continuous polynomials in $I$
that can be written as in \eqref{formal} but \eqref{converge!} does not
hold, but also there may be continuous polynomials that may not be represented as in \eqref{formal}.
\end{rmk}

\begin{rmk}\label{majo}
	Note that  if a formal series as in \eqref{formal} satisfies \eqref{converge!} then  the series is totally
	convergent, and it defines an holomorphic  function  $\matP^\CCC_{r,s}\to \ttV\times \ttV$,
	where
	\[
	\begin{aligned}
	&\matP^\CCC_{r,s}:=\TTT^\ZZZ_s\times B_r(\ell^\io_q(\CCC)), \\
	&\TTT^{\ZZZ}_s:=\{z\in(\CCC/2\pi\ZZZ)^\ZZZ \,:\, {\rm Re}( z) \in \TTT^\ZZZ\,,
	 |{\rm Im} (z_j)|<s \sqrt{\jap{j}} \,,  \} 
	\end{aligned}
	\]
	Similarly, if $f$ 
	$(r,s)$-analytic at $I_0$, then $f_{I_0}$ is $(r,s)$-analyitc, and  \eqref{formal} coincides with its
	Taylor-Fourier series about $I_0$.
\end{rmk}

\begin{defi}\label{differenziali}
Let $(\ttV,\|\cdot\|_\ttV)$ ans $(\ttW,\|\cdot\|_\ttW)$ be sequence Banach spaces and
let $\LL(\ttW,\ttV)$ be the set of continuous linear operators from $\ttW$ to $\ttV$ which admit
a matrix representation as
\[
({L}[w])_j := \sum_{k\in I_\ttW,}(L_{k}^j w_{k}),\qquad\qquad \forall \ j\in I_{\ttV}.
\]
We define
\begin{equation}\label{maj}
\MM(\ttW,\ttV):=\{L\in \LL(\ttW,\ttV) \;: \; \und{L} \in \LL(\ttW,\ttV)\},
\end{equation}
where $\und{L}$ is the Cauchy majorant defined component-wise as
\begin{equation}\label{maggiorante}
(\und{L}[w])_j := \sum_{k\in I_\ttW,}(|L_{k}^j|w_{k}),\qquad\qquad \forall \ j\in I_{\ttV},
\end{equation}
with $I_{\ttV}, I_{\ttW}$ the indices sets of $\ttV,\ttW$ respectively.
Note that $\MM(\ttW,\ttV)$ is a Banach space endowed with the operator norm 
\begin{equation}\label{normaop}
\| L\|_{\MM(\ttW,\ttV)}:= \sup_{\|w\|_{\ttW}=1}\|\und{L}[w]\|_\ttV.
\end{equation}
As customary,
if $\ttW=\ttV$ we shorten $\MM(\ttV,\ttV)=\MM(\ttV)$. If $\ttW=\TT$
andf $\ttV=\ell^\io_q$ we write $\| L\|_{\MM,q}=\| L\|_{\MM(\TT,\ttV)}$.
\end{defi}

\begin{defi}\label{campovett}
We say that a map $X:\matP_r \to T\matP$ is an $(r,s)$-analytic vector field if 
$$\|X\|_{r,s,T\matP}:= \frac{\|X^{(\theta)}\|_{r,s,\ell^\io}}{s}+ \frac{\|X^{(I)}\|_{r,s,\ell^\io_q}}{r}<\io .$$
\end{defi}


Recall that if $\Phi :\matP_{r'} \to \matP_{r}$ is a diffeomorphism and $\matF:\matP_r\to \ttV$
one defines the pull-back via $\Phi$ of $\matF$ as
\begin{equation}\label{pb}
\Phi^*\matF := \matF\circ \Phi.
\end{equation}

\begin{rmk}\label{commutatore}
Given $X,Y$ $(r,s)$-analytic vector fields, we may define their commutator $[X,Y]$ component-wise
in the standard way. Then the Cauchy
estimates ensure that $[X,Y]$ is $(r',s')$-analytic for any $r'<r$, $s'<s$.
\end{rmk}

Given an $(r,s)$-analytic Hamiltonian $H:\matP_r\to\RRR$, we define the corresponding Hamiltonian vector field component-wise as
\begin{equation}\label{vectorfield}
\begin{aligned}
X_H^{(\theta_j)} &= \del_{I_j} H \\
X_H^{(I_j)} &= - \del_{\theta_j} H,
\end{aligned}
\end{equation}
and note that $X_H$ can be written as a Lindsted series in the sense that each component can be represented 
as in \eqref{formal}, and \eqref{converge!} holds with $\ttV=\RRR$.

\begin{defi}\label{reghami}
We say that a Hamiltonian $H:\matP_r\to\RRR$ is $(r,s)-$\emph{regular} if it is analytic and the Hamiltonian vector field
$X_H:\matP_r\to T\matP$ is $(r,s)$-analytic as well. For compactness of notation we write
\[
\| H\|_{r,s}:= \|X_H\|_{r,s,T\matP}
\]
and denote the set of $(r,s)$-regular Hamiltomians as $\matH_{r,s}$.
\end{defi}

Note that if $H$ is a $(r,s)$-regular Hamiltonian, then $X_H$ is the unique vector field such that
\begin{equation}\label{unicop}
dH[Y] = \Omega[ X_H, Y] ,\qquad \forall\, Y \in T\matP,
\end{equation}
where $\Omega$ is the symplectic $2$-form defined as follows.
For any $p\in\matP$, we define on $T_p\matP$ a bilinear form $\Omega_p$ by setting, for any $X,Y\in T_p\matP$,
\begin{equation}\label{simpl}
\begin{aligned}
\Omega_p[X,Y]&:=dI\wedge d\theta [X,Y] \\
&:=\sum_{j\in\ZZZ} dI_j\wedge d\theta_j [(X_\theta,X_I),(Y_\theta,Y_I)] = \sum_{j\in\ZZZ} (X_{\theta_j}Y_{I_j} - X_{I_j}Y_{\theta_j})
\end{aligned}
\end{equation}
which is well defined, continuous, antisymmetric and non degenerate.
The map $\Omega_p$ does not depend on the point $p$ and hence it is trivially smooth: in particular
$\Omega$ acts on $C^1$-vector fields. Moreover
\[
\begin{aligned}
T\matP &\to (T^*\matP) \\
X &\mapsto \Omega[X,\cdot]
\end{aligned}
\]
is injective but not surjective. This makes $\Omega$ a weak symplectic structure on $\matP$.
As in \cite{BMPduca}, we introduce the following product measure.

\begin{defi} \label{probmeas}
We endow $B_r(\ell^\io_q)$ with the product probability measure $\m$ induced
from the product measure on $[-1/2,1/2]^\ZZZ$ by the map
\[
\begin{aligned}
&B_r(\ell^\io_q) \longrightarrow [-1/2,1/2]^\ZZZ \\
&\{I_{j}\}_{j\in\ZZZ} \longmapsto \{\jap{j}^q I_j/(2r)\}.
\end{aligned}
\]
\end{defi}

Following \cite{Bjfa}, we introduce the set of $\g$-Diophantine vectors as

\begin{equation} \label{diofa}
\DD_\g
:=\bigg\{  \omega \in  \RRR^\ZZZ: \, | \omega\cdot\nu|\ge \g \prod_{j\in \ZZZ}(1+\jap{j}^{2} |\nu_j|^{2})^{-(q+1)}
\quad \forall\, \nu\in\ZZZ^\ZZZ_f\setminus\{0\}\bigg\} ,
\end{equation}
where $\g>0$.

%


Let us consider a  Hamiltonian of the form
\begin{equation}\label{ham}
	H:\matP\to\RRR,\qquad\qquad 
	H(\theta,I) = \om_0\cdot I + \frac{1}{2}  I \cdot Q[I]   + \e f(\theta),
\end{equation}
and assume the following.
\begin{itemize}
\item (finite energy) $\om_0\in\ell^\io$
\item (twist condition) $Q=Q^{T},Q^{-1}\in\MM(\ell^\io_q)\cap\MM(\ell^\io)$
\item (regularity) $f$ is $(r,4s)$-regular for any $r>0$
\item (smallness) $\e$ is sufficiently small w.r.t. the parameter $\g$
\end{itemize}

\begin{theo}[Main result of \cite{CGP}]\label{mainbc}
Fix $\g>0$ small enough.
	Under the hypotheses above there exists $\e_0>0$ such that for all $\e\in(-\e_0,\e_0)$ there exists a positive measure set $\mathcal C\subset B_{1/2}(\ell^\io_q)$ and two functions
	\[
	\al: \TTT^\ZZZ \times \mathcal C\times (-\e_0,\e_0) \to \ell^\infty \,,\qquad  A: \TTT^\ZZZ \times \mathcal C\times (-\e_0,\e_0) \to \ell^\infty_q\,,
	\]
	which are $2s$-analytic w.r.t.~$\f\in\TTT^\ZZZ$, such that
	\begin{equation}\label{toro}
		u:\TTT^\ZZZ \to \matT_\e(I_0),\qquad\qquad u(\f)=u(\f;I_0,\e)=(\f+\al(\f;I_0,\e),I_0 +A(\f;I_0,\e))
	\end{equation}
	parametrizes  an invariant torus $\matT_\e(I_0)\subset \matP$ on which the dynamics is linear with frequency $\om=\om(I_0):=\om_0 + Q[I_0]$, i.e.~setting
	\begin{equation}\label{dinamica}
		\Psi^t_\om : \TTT^\ZZZ \to \TTT^\ZZZ \qquad\qquad
		\f \mapsto \f + \om t
	\end{equation}
	one has
	\begin{equation}\label{soluzione}
		u \circ \Psi^t_\om = \Phi^t_H \circ u
	\end{equation}
	where $\Phi^t_H=\Phi_{X_H}^t$ is the hamiltonian flow. 
	The set $\CCCC$ is defined as 
\[
\CCCC:=\{I_0 \in B_{1/2}(\ell^\io_q) \; : \; \om(I_0) \in\DD_\g\}.
\]
Finally the torus map \eqref{toro} satisfies the bounds 
	\begin{equation}\label{stibound}
	 \| \al\|_{2s;\ell^\io} \le C \e,\qquad
\| A\|_{2s;\ell_q^\io} \le C \e .
\end{equation}

\end{theo}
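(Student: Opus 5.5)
The plan follows the Lindstedt/renormalization scheme of \cite{gal}, transplanted to the infinite dimensional functional setting of Definition \ref{bestie}, as in \cite{CGP}.

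\textbf{Step 1: reduction to a functional equation.} Insert \eqref{toro} into \eqref{soluzione}. Since $H$ is quadratic in $I$ and $f$ depends only on $\theta$, the invariance equations read
\[
\om\cdot\partial_\f \al = Q[A],\qquad \om\cdot\partial_\f A = -\e\,\partial_\theta f(\f+\al).
\]
Eliminating $A$ gives the closed equation
\[
(\om\cdot\partial_\f)^2\al = -\e\, Q[\partial_\theta f(\f+\al)].
\]
The zero-average condition $\av{\partial_\theta f(\f+\al)}=0$ is automatic, because $\partial_\f\al$ integrates to zero and the map $\f\mapsto\f+\al(\f)$ is a change of variables close to the identity. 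Hence we may normalize $\av{\al}=0$, and $A$ is then recovered as $(\om\cdot\partial_\f)^{-1}$ of the right-hand side, with its average fixed so that $A=Q^{-1}\om\cdot\partial_\f\al$ (twist condition).

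\textbf{Step 2: formal solution and tree expansion.} Expand $\al=\sum_{k\ge1}\e^k\al^{(k)}$ and write the Fourier coefficients $\al^{(k)}_\nu$ as sums over labelled trees, in which:
\begin{itemize}
\item nodes carry Fourier modes $\nu_v$ of $f$ and derivative factors $\nu_v$;
\item lines carry the propagators $Q/(\om\cdot\nu_\ell)^2$.
\end{itemize}
The node factors are controlled by $|f_\nu|\le \|f\|e^{-4s|\nu|_\star}$, and the matrix structure is controlled through the Cauchy majorant norms of $Q$ and $Q^{-1}$ in $\MM(\ell^\io_q)\cap\MM(\ell^\io)$.

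\textbf{Step 3: small divisors (the main obstacle).} The propagators are controlled by \eqref{diofa}, which gives
\[
|\om\cdot\nu|^{-1}\le \g^{-1}\prod_j(1+\jap{j}^2|\nu_j|^2)^{q+1}.
\]
The key input, taken from \cite{MP,CMP}, is the bound
\[
\prod_j(1+\jap{j}^2|\nu_j|^2)^{q+1}\le e^{C(q)|\nu|_\star^{2/3}}\ (\text{or } \le e^{\sigma|\nu|_\star}\,e^{C(\sigma)}\text{ for any }\sigma>0),
\]
so each divisor costs at most a fraction of the analyticity $e^{-s|\nu|_\star}$ available from the node factors.

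Naive products of divisors along a tree, however, produce factorial growth $k!^{c}$. To overcome this we introduce a multiscale decomposition $\chi_n(\om\cdot\nu)$ and a Bryuno-type counting lemma, in the infinite dimensional version of \cite{CGP}. The lemma bounds the number of scale-$n$ lines by $\sim 2^{-n/c}\sum_v|\nu_v|_\star$, using the fact that two lines of comparable scale must have momenta differing by a large $|\cdot|_\star$ amount.

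Self-energy clusters (resonances) must be handled by showing cancellations. As in \cite{gal}, there is no need to renormalize frequencies here. Instead one uses the symmetry identities $\sum$ over cluster insertions $=O((\om\cdot\nu)^2)$, which follow from the Hamiltonian (symmetric $Q$, gradient $f$) structure. This is the delicate part: the cancellation must be verified in the Cauchy-majorant operator norms uniformly in the infinitely many components, which is where the $\MM$ setting is needed.

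\textbf{Step 4: convergence, bounds and measure.} Once the counting lemma and the cancellations are established, the tree sums converge for $|\e|<\e_0(\g,s)$. The resulting series satisfies \eqref{stibound} in the $2s$-norm, having spent $2s$ of the $4s$ analyticity.

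Positivity of the measure of $\CCCC$ follows from the result of \cite{Bjfa,BMPduca}: with respect to the product measure $\m$, the set $\{\om\in\DD_\g\}$ has measure $\ge 1-C\g$. One transports this statement through the map $I_0\mapsto\om_0+Q[I_0]$, which is invertible and bounded with bounded inverse by the twist condition.

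Finally, analyticity in $\f$ together with the bound \eqref{stibound} guarantees that $u$ is a well-defined torus embedding for $\e$ small.
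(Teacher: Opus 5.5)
Your overall route (the invariance equations $\om\cdot\del_\f\al=Q[A]$, $\om\cdot\del_\f A=-\e\del_\theta f(\f+\al)$, a tree expansion with multiscale decomposition, a Bryuno-type lemma and Gallavotti's resonance cancellations in the majorant setting) is the one the paper attributes to \cite{CGP}. The paper does not prove the existence part itself. However, two steps fail as written.

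\textbf{The measure estimate.} This is the part the paper actually carries out (Appendix \ref{misura}), and your argument does not work there. In infinite dimensions a bounded linear bijection has no Jacobian. The image of the product measure $\m$ under $I\mapsto\om_0+Q[I]$, for non-diagonal $Q$, need not be comparable with the independent-coordinates product measure on frequencies for which \cite{Bjfa} shows $\DD_\g$ is large. Already for block-diagonal $Q$ with $2\times 2$ symmetric blocks, Kakutani's theorem gives mutual singularity. Also, $Q[B_{1/2}(\ell^\io_q)]$ is not a box. So nothing can be ``transported''.

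The paper instead works directly on the actions. Since $\om(I)\cdot\nu=\om_0\cdot\nu+\sum_j I_j(Q\nu)_j$ is affine in $I$, each resonant set $\mathscr{R}^{(\nu)}(\g)$ is a slab. Pick $j_0$ with $|(Q\nu)_{j_0}|=\|Q\nu\|_\io$ and apply Fubini in the single coordinate $I_{j_0}$. This gives relative measure at most $2\g\Delta^{(\nu)}\jap{j_0}^q/\|Q\nu\|_\io$, and $1/\|Q\nu\|_\io\le\|Q^{-1}\|_{\MM(\ell^\io)}$ because $\|\nu\|_\io\ge1$. One then sums over $\nu$ as in \cite{Bjfa,BMP1}. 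This is where $Q^{-1}\in\MM(\ell^\io)$ really enters, not through invertibility of the frequency map.

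\textbf{The compatibility condition in Step 1 is not automatic.} For arbitrary $\al$ the only available identity is $\av{(\uno+\del_\f\al)^T\del_\theta f(\f+\al)}=\av{\del_\f[f(\f+\al)]}=0$. Hence $\av{\del_\theta f(\f+\al)}=-\av{(\del_\f\al)^T\del_\theta f(\f+\al)}$, which is nonzero in general. Already for one rotator, $f=\cos\theta$ and $\al=a\cos\f$ give $-a\pi+O(a^2)$. The average vanishes, order by order in $\e$, only along the truncated solution. Insert $\e\del_\theta f(\f+\al)=-Q^{-1}(\om\cdot\del_\f)^2\al$ and integrate by parts in $\om\cdot\del_\f$. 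The symmetry of $Q^{-1}$ then turns the $k$-th component into $\frac12\av{\del_{\f_k}\big((\om\cdot\del_\f\al)\cdot Q^{-1}(\om\cdot\del_\f\al)\big)}=0$. This must be proved for the Lindstedt hierarchy to be solvable; it is the $\om\cdot\nu=0$ instance of the cancellations you invoke in Step 3.

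\textbf{Quantitative slips in Step 3 (for Bourgain-type vectors).} First, the divisor bound needs a logarithm: $\prod_j(1+\jap{j}^2|\nu_j|^2)^{q+1}\le e^{C|\nu|_\star^{2/3}\log(1+|\nu|_\star)}$. This is sharp: take $\nu_j=1$ for $|j|\lesssim|\nu|_\star^{2/3}$. Second, a line on scale $n$ therefore only forces $|\nu_\ell|_\star\gtrsim(n/\log n)^{3/2}$. The Bryuno count is thus $N_{\ge n}\lesssim(\log n/n)^{3/2}\sum_v|\nu_v|_\star$, not $2^{-n/c}\sum_v|\nu_v|_\star$. It still closes, because $\sum_\ell n_\ell=\sum_n N_{\ge n}$ and $\sum_n(\log n/n)^{3/2}<\infty$.
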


%
Note that for such results to hold we do not
need $\Phi^t_H$ to be well defined on all $\matP$ but only on $\matT_\e(I_0)$, where it
is well defined by construction, and one has
\[
\left.\Phi^t_H \right|_{\matT_\e(I_0)} = u \circ \Psi^t_\om \circ u^{-1}.
\]

Note that the hypothesis $Q,Q^{-1}\in \MM(\ell^\io)$ is needed to obtain the measure estimates
on the set $\CCCC$; see Appendix \ref{misura}.
From now on the dependence on $I_0,\e$ will be omitted unless needed.

Our aim is to prove that the tori provided by \cite{CGP} are stable in the following sense.

\begin{theo}\label{stimadeitempivera}
For any $I_0 \in \CCCC$ there exists $\de_0$ small enough such that,
for all $\de<\de_0$ and
for any $(\theta_0,I_0)\in \matU_{\de/16}(\matT_\e(I_0))$, 
 the hamiltonian flow $\Phi_{H}^t$ is well defined and it remains in $\matU_{\de}(\matT_\e(I_0))$ 
for all
\[
|t| < C_1 (\e\de)^{-C_2\Big( {\log(\frac{1}{\e\de})}\Big)^{1/3} }
\]
for some appropriate constants $C_1,C_2$.
\end{theo}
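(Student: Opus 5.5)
The proof has two stages. First we put the Hamiltonian in a normal form adapted to the torus $\matT_\e(I_0)$ of Theorem \ref{mainbc}. Then we run a finite-step non-resonant averaging (Nekhoroshev-type) scheme near the torus and optimize the number of steps.

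\emph{Step 1: adapted symplectic coordinates.} Using the parametrization $u(\f)=(\f+\al(\f),I_0+A(\f))$ and the invariance equation \eqref{soluzione}, I will first show that the torus is isotropic. The pull-back $u^*\Omega$ is a closed $2$-form on $\TTT^\ZZZ$ with $2s$-analytic coefficients, and it is invariant under the Diophantine flow $\Psi^t_\om$. Invariance forces the coefficients to have zero Fourier modes for $\nu\neq0$, because $\om\cdot\nu\neq0$ by \eqref{diofa}. The zero mode is exact, hence vanishes. So $u^*\Omega=0$, and since the torus has ``half'' the dimension of $\matP$ it is Lagrangian.

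Next, following \cite{h,BB}, I will define the map
\[
\Phi(\f,y)=\big(\f+\al(\f),\,I_0+A(\f)+(\uno+\del_\f\al(\f))^{-T}[y]\big),
\]
corrected if necessary by a small term so that it is exactly symplectic; isotropy is what makes this correction possible. Using \eqref{stibound}, the Cauchy estimates and the majorant norms of Definition \ref{differenziali}, I will check three things. First, $\Phi$ is a close-to-identity diffeomorphism $\matP_{r'}\to\matP_r$. Second, it preserves the classes $\matH_{r,s}$ up to a loss in $r,s$. Third, it conjugates $H$ to
\[
K(\f,y)=\om\cdot y+\tfrac12 y\cdot \widetilde Q(\f)[y]+R(\f,y),\qquad R=O(\|y\|^3),
\]
where $\widetilde Q-Q=O(\e)$. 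The linear term has no $\f$-dependence because the torus $\{y=0\}$ is invariant with linear flow of frequency $\om$.

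\emph{Step 2: averaging near $y=0$.} Restrict to $\matP_\rho$ with $\rho\sim\de$. On this set the non-normal part $\tfrac12 y\cdot(\widetilde Q-\av{\widetilde Q})y+R$ has size $\e\rho^2+\rho^3$ in the norm $\|\cdot\|_{\rho,s'}$. I will perform $N$ Lie-transform steps with generating functions $\chi_k$ solving homological equations $\{\om\cdot y,\chi_k\}=F_k-\av{F_k}$, each time shrinking the analyticity strip by $s/(2N)$.

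The key tool is the small-divisor bound of \cite{MP,CMP} for Bourgain-Diophantine vectors: for $\nu\in\ZZZ^\ZZZ_f$,
\[
\sup_\nu \frac{e^{-\sigma|\nu|_\star}}{\g^{-1}\prod_j(1+\jap{j}^2|\nu_j|^2)^{q+1}}\le \exp\big(C\sigma^{-1/2}\log(1/\sigma)\big)
\]
or a similar estimate. With $\sigma=s/N$ this gives a loss of $e^{C(N)^{1/2}\cdots}$ per step. Using the commutator estimate of Remark \ref{commutatore}, each step improves the remainder by a factor $\eta\, e^{C\sqrt N\log N}N^2$, where $\eta\sim \e\de$ (or $\de$) is the size of the perturbation relative to $\om\cdot y$. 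After $N$ steps the remainder is of order $(\eta\, e^{C\sqrt N\log N})^N$, and $\dot y$ is bounded by it.

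\emph{Step 3: optimization and conclusion.} Starting in $\matU_{\de/16}$, the variable $y$ can exit $B_\de$ only after a time $T\gtrsim \de/\text{remainder}$. Choosing $N$ so that $e^{C\sqrt N\log N}\sim \eta^{-1/2}$, i.e.\ $\sqrt N\log N\sim\log(1/\eta)$, gives a time of the form $\eta^{-cN}$ with $N$ a power of $\log(1/\eta)$. The exponent $1/3$ in the statement should come from the precise form of the \cite{MP,CMP} bound, which is more like $\exp(C\sigma^{-1/2}\log\sigma^{-1}\ldots)$ with an extra ultraviolet cutoff. Finally I will pull back through $\Phi$ to compare $\matU_\de$ in the two sets of coordinates, which accounts for the constants $1/16$ and $C_1$.

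\emph{Main obstacle.} I expect Step 1 to be the hardest part: getting quantitative bounds that make $\Phi$ well defined on a domain of uniform size in the infinite-dimensional space. This requires the inverse $(\uno+\del_\f\al)^{-T}$ to lie in $\MM(\ell^\io_q)$, and keeping the weights $w$ in \eqref{converge!} requires a careful majorant argument. The second delicate point is tracking the per-step small-divisor loss sharply enough to produce the exponent $1/3$.
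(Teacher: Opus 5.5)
Your architecture is the paper's: isotropy of $u$, the map $(\f,y)\mapsto(\f+\al(\f),I_0+A(\f)+(\uno+d\al(\f))^{-T}[y])$, $N$ averaging steps, optimisation in $N$, and Duhamel. The gap is in the step that actually produces the stability time, which is the content of the statement. You want the estimate $\sup_\nu e^{-\sigma|\nu|_\star}\g^{-1}\prod_j(1+\jap{j}^2|\nu_j|^2)^{q+1}\le\exp(C\sigma^{-1/2}\log(1/\sigma))$, but it is false for these weights. Take $\nu_j=1$ for $1\le j\le m$ with $m=\sigma^{-2}$. Then $\sigma|\nu|_\star\le\sigma m^{3/2}=\sigma^{-2}$, while $\prod_j(1+j^2)\ge (m!)^2\ge (m/e)^{2m}$. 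So the supremum is at least $\exp(c\,\sigma^{-2}\log(1/\sigma))$; the sharp order is $\exp(C\sigma^{-2}\log^3(1/\sigma))$. The correct input, which is what \cite{CMP} gives and the paper uses, is the following. If $|\nu|_\star\le K$, then $\nu$ is supported in $\{\jap{j}\le K^2\}$ on at most $O(K^{2/3})$ sites, hence $|\om\cdot\nu|\ge C_1e^{-C_2K^{2/3}\log K}$. With $\sigma=s/N$ your per-step loss is therefore $e^{CN^2\log^3 N}$, not $e^{C\sqrt N\log N}$. Your balance $\sqrt N\log N\sim\log(1/\eta)$ then collapses (it would even give the unjustified time $\eta^{-c\log^2(1/\eta)}$). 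The exponent $1/3$ is left to an unspecified ``precise form'' of the bound rather than derived.

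What is missing is the actual parameter balance, which in the paper runs as follows.
One fixes a single cutoff $K$ for all $N$ steps. Each homological equation is solved with the full $D=\om\cdot y+\frac12 y\cdot Q[y]$. It costs $e^{\mathtt C K^{2/3}\log K}$ and requires $\de<r(K)\sim e^{-2C_2K^{2/3}\log K}$, so that $|(\om+Q[y])\cdot\nu|$ stays away from zero. The tail $\Pi_K^\perp$ costs $e^{-s_0K/(2N)}$. Compatibility of these with $\|R_0\|_{\de_0,s_0}=e^{-a}\le C\e\de_0$ is \eqref{piccino}. It forces $K\sim aN$ and $K^{2/3}\log K\lesssim a$; the paper takes $K=4aN/s_0$ and $N<a^{1/3}$, while $\de_0<r(K)$ gives \eqref{tre}. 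Lemma \ref{itero} then gives a remainder $\lesssim e^{-caN}=e^{-ca^{4/3}}$, and Duhamel yields $T\gtrsim e^{c\log^{4/3}(1/(\e\de))}=(\e\de)^{-c\log^{1/3}(1/(\e\de))}$.

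Your cutoff-free scheme with $\{\om\cdot y,\chi\}$ can also be repaired. Use the correct loss $e^{CN^2\log^3N}$, and note that $\{\frac12 y\cdot\av{\widetilde Q}[y],\chi\}$ contributes a factor $\de$ per step. One then needs $N^2\log^3 N\lesssim\log(1/\de)$, which still gives more than the stated time, but this computation has to be carried out.

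Two smaller points. First, invariance of $u^*\Omega$ under $\Psi^t_\om$ cannot be deduced from symplecticity of $\Phi^t_H$, which is only defined on the torus. Argue infinitesimally instead: $L_\om(u^*\Omega)=d(i_\om u^*\Omega)=d\,d(u^*H)=0$, by \eqref{diffv} and \eqref{carnanazzo}. Second, no symplectic correction of your map is needed, since $\Upsilon^*\la-\la=u^*\la$ is closed. This is also how Lagrangianity is obtained; ``half the dimension'' has no meaning here.
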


Our proof is made of two steps.

First we prove that the tori provided by \cite{CGP} are
isotropic, and in fact lagrangian so that,
following an idea of Moser (see for instance \cite{BB} for a review),
we construct symplectic variables $(\f,y)$ for which the invariant torus corresponds to $y=0$.
In other words we prove the following result.

\begin{theo}\label{bibi}
For any $I_0\in\CCCC$ the torus $\matT_\e(I_0)$ is lagrangian. Moreover
there is analytic symplectic map $\Upsilon:\matP_{r}\to \matP_{2r}(I_0)$ for all $r>C\e$
for some $C$, such that
\[
H\circ \Upsilon (\f,y) = \om\cdot y + \frac{1}{2}y\cdot Q[y] + R(\f,y),\qquad R=O(\e\|y\|_{q}^2),\quad \|R\|_{r,s}<\e C,\quad \om=\om(I_0).
\]
\end{theo}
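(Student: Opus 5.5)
The argument follows the finite-dimensional scheme of Herman and Berti--Bolle: first prove isotropy of the torus from invariance and the Diophantine condition, then straighten the torus by an explicit symplectic map built from the embedding $u$.

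\textbf{Step 1: isotropy.} Consider the pull-back $u^*\Omega$ on $\TTT^\ZZZ$. Its coefficients are
\[
a_{jk}(\f)=\Omega\big[\del_{\f_j}u,\del_{\f_k}u\big],
\]
and they are $s$-analytic functions of $\f$ by \eqref{stibound} and Cauchy estimates. Two facts hold:
\begin{itemize}
\item $u^*\Omega$ is exact, since $\Omega=d\lambda$ with $\lambda=\sum_j I_j\,d\theta_j$. Hence $u^*\Omega=d(u^*\lambda)$, and each $a_{jk}$ has zero average.
\item By the invariance \eqref{soluzione}, together with the fact that $\Phi_H^t$ preserves $\Omega$, we get $\Psi_\om^{t*}(u^*\Omega)=u^*\Omega$. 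Differentiating in $t$ gives $\om\cdot\del_\f a_{jk}=0$.
\end{itemize}
In Fourier coefficients this reads $\ii(\om\cdot\nu)\,(a_{jk})_\nu=0$. Since $\om\in\DD_\g$, we have $\om\cdot\nu\neq0$ for every $\nu\neq0$, so $(a_{jk})_\nu=0$ for $\nu\ne0$. Combined with zero average, $a_{jk}\equiv0$. Note that no quantitative small-divisor estimate is needed here, only non-resonance.

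Lagrangianity then follows. The torus is the graph-like image of a map close to the identity, with $\|\del_\f\al\|\lesssim\e$. So its tangent space is a complement-sized subspace, namely the image of $\uno+\del_\f\al$ in the $\theta$ directions. It is isotropic, and it is maximal because the $y$-fibres provide a symplectic complement.

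\textbf{Step 2: the symplectic map.} Define
\[
\Upsilon(\f,y):=\Big(\f+\al(\f),\; I_0+A(\f)+\big[(\uno+\del_\f\al(\f))^{-T}\big]y\Big),
\]
possibly corrected by a term of the form $\big[(\uno+\del_\f\al)^{-T}\big]\,\del_\f\sigma$ to enforce exact symplecticity. Here $\uno+\del_\f\al$ is invertible in $\MM(\ell^\io)$ by a Neumann series, because $\|\del_\f\al\|_{\MM}\lesssim\e/s$.

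A direct computation shows that
\[
\Upsilon^*\Omega=dy\wedge d\f+u^*\Omega.
\]
This uses that $\lambda$ pulls back to $\sum_j y_j\,d\f_j$ plus the pull-back of $\lambda$ under $u$. By Step 1 the last term vanishes, so $\Upsilon$ is symplectic and maps $\{y=0\}$ onto $\matT_\e(I_0)$.

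Quantitatively, $\|A\|_{q}\le C\e$ places the image inside $\matP_{2r}(I_0)$ whenever $r>C\e$. Preservation of $(r,s)$-regularity requires showing that composition with $\Upsilon$ maps $\matH_{r,s}$-type norms to the corresponding norms with slightly smaller $s$. This uses the majorant structure $\MM(\ell^\io_q)$ for the transposed inverse acting on $y\in\ell^\io_q$, together with the weights $w$ in \eqref{converge!}.

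\textbf{Step 3: normal form.} Expand $K:=H\circ\Upsilon$ in $y$ as $K=K_0(\f)+K_1(\f)\cdot y+O(y^2)$.
\begin{itemize}
\item Invariance of $y=0$ with linear flow $\f\mapsto\f+\om t$ forces $\del_\f K_0=0$ and $\del_y K|_{y=0}=\om$, so $K_1=\om$. Up to an irrelevant constant, $K_0$ can be dropped.
\item The quadratic part is $\frac12\, y\cdot M^TQM\,y$ with $M=(\uno+\del\al)^{-T}$, since $f$ does not depend on $I$. Because $M=\uno+O(\e)$ in $\MM$, we have $M^TQM-Q=O(\e)$. There are no higher-order terms in $y$, because $H$ is quadratic in $I$ and $\Upsilon$ is affine in $y$.
\end{itemize}
Therefore $R=\frac12\, y\cdot(M^TQM-Q)\,y$, and $\|R\|_{r,s}\le C\e$ follows from the operator bounds.

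\textbf{Main obstacle.} The delicate part is not the algebra but the quantitative control in the infinite-dimensional norms. One must show that $\del_\f\al$, which is an infinite matrix, lies in $\MM(\ell^\io)$ and $\MM(\ell^\io_q)$ with norm $O(\e)$, and that the composition $f\circ(\f+\al(\f))$ stays $s$-analytic with the $\sqrt{\jap{j}}$-weighted strip widths. I would get this from the $2s$-analyticity in \eqref{stibound}: the decay $e^{-2s|\nu|_\star}$ of the Fourier modes dominates the polynomial factors $\jap{j}^{q}$ that appear when estimating the majorants, losing only part of the analyticity strip.
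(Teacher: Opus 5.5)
Your proposal is correct and follows essentially the paper's argument: the same map $\Upsilon$, symplecticity from the closedness of $\Upsilon^*\lambda-\lambda=u^*\lambda$ given isotropy, and the same explicit quadratic remainder $R$. The one difference is in the isotropy step. The paper gets $\om\cdot\del_\f M_u=0$ from Cartan's formula and the identity $i_\om(u^*\Omega)=d(u^*H)$, which uses only the invariance equation $du[\om]=X_H\circ u$ on the torus. Your route instead needs $\Phi^t_H$ to be defined and symplectic on a neighbourhood of the torus. That holds here, for instance via Lemmas~\ref{flussi} and~\ref{hamflow} applied to $tH$ for small $t$, but it is an extra input.
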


At the formal level, such a change of variables is the same as in the classical literature
on the topic. However in the present case the torus itself is infinite dimensional so
some care is needed. Precisely, one first has to define carefully $(r,s)$-analytic differential
forms and show that they behave as their finite dimensional counterparts, then
one has to verify that the change of variables not only is formally symplectic, but it maps
the phase space into itself, and the Hamiltonian in the new variables is still regular.

Second we use the functional setting introduced
in Section \ref{poisson} to show that one can perform $N$ steps of ``Birkhoff Normal Form'' as it
happens in the finite dimensional case, so as to obtain the following.

Set 
\begin{equation}\label{di}
D[y] :=  \om\cdot y + \frac{1}{2}y\cdot Q[y].
\end{equation}

\begin{theo}\label{birkof}
For any $I_0\in\CCCC$ and any finite $N>0$, there is a constant $C>0$
such that, fixing  
\begin{equation}
 \de_0 \le C\min\{ e^{-2N^3}\label{due1} ,\sqrt{\e}\}
\end{equation}
we have that, for any $\de\in(0,\de_0)$ there is an analytic symplectic change of variables
$\Upsilon_N: \matP_{\de/2}\to \matU_{\de}$ with
\[
\matU_\de=\matU_\de(\matT_\e(I_0)):= \{z=(\theta,I) \in \matP \,:\, \dist(z,\matT_{\e}(I_0))<\de\}
\]
 such that
\begin{equation}\label{norm}
H\circ\Upsilon_N (\f,y) = D[y] + {Z}_N(y) + {R}_N(\f,y),
\end{equation}
where $\om=\om(I_0)$, $Z_N,R_N$ are $(\de/2,s/2)$-regular. Moreover one has
\[
\|Z_N\|_{\delta/2,s/2} \le 2 \|R\|_{\de,s}
	\,,\quad 	\|R_N\|_{\delta/2,s/2} \le C^N \| R\|_{\de,s}^{N+1}
	e^{ \mathtt C' N^{7/4}},
\]
for some $\mathtt{C}'>0$.
\end{theo}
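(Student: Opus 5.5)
We start from the normal form of Theorem \ref{bibi}: after composing with $\Upsilon$ the Hamiltonian is $D[y]+R(\f,y)$ with $R=O(\e\|y\|_q^2)$. On the shell $\|y\|_q<\de$ this gives $\|R\|_{\de,s}\lesssim \e\de$, which is the small parameter of the iteration. Since $D$ depends only on $y$, we run $N$ steps of a Lie-series (Birkhoff) averaging. The map $\Upsilon_N$ will be $\Upsilon$ composed with $N$ time-one flows $\Phi^1_{\chi_1}\circ\dots\circ\Phi^1_{\chi_N}$. Each flow is defined on a domain shrunk from $(\de_{k-1},s_{k-1})$ to $(\de_k,s_k)$, with
\[
\de_k=\de\Bigl(1-\frac{k}{2N}\Bigr),\qquad s_k=s\Bigl(1-\frac{k}{2N}\Bigr).
\]

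\textbf{Step $k$.} Suppose the Hamiltonian is $D+Z_{k}+R_{k}$, where $Z_k=Z_k(y)$ and $R_k$ has size $\sim(\e\de)^{k+1}$. We split $R_k$ into:
\begin{itemize}
\item its angle average $\av{R_k}$, which is added to $Z_k$;
\item an ultraviolet part, with Fourier modes $|\nu|_\star> K_k$;
\item the resonant-free remainder with $|\nu|_\star\le K_k$.
\end{itemize}
The ultraviolet part has size $e^{-(s_{k-1}-s_k)K_k}\|R_k\|$, so we choose $K_k$ so that it is $O((\e\de)^{k+1}\cdot\text{small})$ and treat it as remainder.

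For the low modes we solve the homological equation $\om\cdot\del_\f\chi_k = R_k^{\le K_k}-\av{R_k^{\le K_k}}$, i.e. we divide by $\ii\om\cdot\nu$. Here $\om$ is the fixed frequency $\om(I_0)\in\DD_\g$, and the $y$-dependent part $Q[y]\cdot\del_\f$ of $\{D,\cdot\}$ is moved into the next-order error because it carries an extra factor $\de$.

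We then control $\|\chi_k\|_{\de_k,s_k}$ using the bounds on small divisors with cutoff proved in \cite{MP,CMP}. For $\om\in\DD_\g$ and $|\nu|_\star\le K$ these give
\[
\g^{-1}\prod_j(1+\jap{j}^2|\nu_j|^2)^{q+1}\le e^{C K^{3/4}}
\]
or similar. The loss in the analyticity strip $e^{-\s|\nu|_\star}$ absorbs this product, with $\s=s/N$ and a cost of $e^{C(N/s)^{\dots}}$. Summing these losses over $N$ steps is what produces the factor $e^{\mathtt C' N^{7/4}}$.

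The new Hamiltonian is $e^{\{\chi_k,\cdot\}}(D+Z_k+R_k)$. We estimate Lie-series terms by commutator bounds on vector fields, i.e. Cauchy estimates in the $\|\cdot\|_{r,s}$ norms as in Remark \ref{commutatore}, which use the $\MM$-majorant structure. This yields a new remainder of order $(\e\de)^{k+2}$ times the small-divisor constant. Convergence of the Lie series and invertibility of the flows require $\|\chi_k\|\cdot N\ll 1$, and condition \eqref{due1}, $\de\le e^{-2N^3}$ together with $\de\le\sqrt\e$, is exactly what guarantees this, since $\|\chi_k\|\lesssim \e\de\, e^{CN^{7/4}}$. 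The same smallness ensures that $\Upsilon_N$ maps $\matP_{\de/2}$ into $\matP_{\de}$. Composing with $\Upsilon$ from Theorem \ref{bibi} then lands in $\matU_\de$, using the bound $\|A\|\le C\e$ and continuity of $u$.

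\textbf{Conclusion and main obstacle.} Summing the normal-form pieces gives $\|Z_N\|\le\sum_k\|R_k\|\le 2\|R\|$, since the sizes decay geometrically. The remainder is $\|R_N\|\le C^N\|R\|^{N+1}e^{\mathtt C'N^{7/4}}$. The main obstacle is the small-divisor estimate in infinite dimensions: showing that $\sup_{|\nu|_\star\le K}\prod_j(1+\jap{j}^2\nu_j^2)^{q+1}e^{-\s|\nu|_\star}$ grows only like $e^{C\s^{-3}}$-type quantities, so that it survives being iterated $N$ times with $\s\sim s/N$. We will quote \cite{MP,CMP} for this and track constants so the total loss is $e^{\mathtt C'N^{7/4}}$.
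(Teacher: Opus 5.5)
Your overall architecture (compose $\Upsilon$ with $N$ Lie-series steps on domains shrinking by $\de/(2N)$ and $s/(2N)$) matches the paper. The gap is in how you treat the quadratic part of $D$. You solve the homological equation with the frozen frequency, $\om\cdot\del_\f\chi_k=\Pi_{K_k}R_k-\av{R_k}$, and move $\{\tfrac12 y\cdot Q[y],\chi_k\}=\pm Q[y]\cdot\del_\f\chi_k$ into the next remainder, asserting that the new remainder is $O((\e\de)^{k+2})$. That is false for this term. The quadratic part $\tfrac12 y\cdot Q[y]$ is not $\e$-small: its vector field $(Q[y],0)$ has norm of order $\de\|Q\|/s$. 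So by \eqref{mojoodo} the term has size of order $N\de\,\Gamma\,\|R_k\|$, where $\Gamma$ is the small-divisor loss. It gains a factor $\de$, not $\e\de$. Iterating, your scheme gives at best $\|R_N\|\le\|R\|\,(CN\de\,\Gamma)^N$, which misses the claimed $\|R\|^{N+1}$ by a factor $\e^{-N}$. That $\e$-dependence, through $\|R\|_{\de,s}\le C\e\de$, is exactly what is needed later to express the stability time in terms of $\e\de$. More averaging steps with frozen $\om$ do not cure this. Removing $Q[y]\cdot\del_\f\chi_k$ at the next step just produces $Q[y]\cdot\del_\f\chi_{k+1}$, so you are expanding $((\om+Q[y])\cdot\nu)^{-1}$ in a Neumann series in $Q[y]\cdot\nu/\om\cdot\nu$, and the truncation error never picks up a factor $\e$.

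The missing idea is to invert the full operator $\{D,\cdot\}=(\om+Q[y])\cdot\del_\f$, i.e.\ to set $S_\nu(y)=(\ii(\om+Q[y])\cdot\nu)^{-1}f_\nu(y)$, as in the lemma giving \eqref{stimomo}. This forces a finite ultraviolet cutoff and a shrunken action domain. For $|\nu|_\star\le K$ one has $|\om\cdot\nu|\ge C_1e^{-C_2K^{2/3}\log K}$ (not $e^{-CK^{3/4}}$). Hence for $\|y\|_q<r(K)$ as in \eqref{erredikappa} the $y$-dependent divisor stays $\ge\frac12|\om\cdot\nu|$, and differentiating it in $y$ costs one more divisor. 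Then $D$ is removed exactly, and every term of $\mathcal R_+$ is either $\Pi_K^\perp\mathcal R$ or bilinear in $\mathcal S$ and $\mathcal Z+\mathcal R$, so it gains a factor $\|R_0\|$.

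This is also where \eqref{due1} really enters; it is not merely a condition making $N\|\chi_k\|$ small. The paper takes a single cutoff $K=\frac{4}{s_0}aN$ with $a=\log(1/\|R_0\|_{\de_0,s_0})$, so that $e^{-s_0K/(2N)}\ll\|R_0\|$ as required in \eqref{piccino}. Lemma \ref{finale?} then verifies $\de_0<r(K)$ using two inputs: \eqref{vedosubito}, which is the $\sqrt{\e}$ condition and gives $a<3\log(1/\de_0)$, and $N<\frac12\log^{1/3}(1/\de_0)$, which is the $e^{-2N^3}$ condition.

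Finally, your derivation of $e^{\mathtt C'N^{7/4}}$ is not substantiated. Absorbing the product in \eqref{diofa} into a strip loss $\s\sim s/N$ costs $\sup_\nu e^{-\s|\nu|_\star}\prod_j(1+\jap{j}^2\nu_j^2)^{q+1}\ge e^{c\s^{-2}\log(1/\s)}$ per step (take $\nu_j=1$ for $|j|\le\s^{-2}$). Compounded over $N$ steps this is far more than $e^{N^{7/4}}$. In the paper the loss is instead the factor $(Ne^{3\mathtt CK^{2/3}\log K})^N$ of Lemma \ref{itero}, with $K$ tied to $\|R_0\|$.
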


Finally, Theorem \ref{stimadeitempivera} follows by optimizing $N$ and usual Duhamel
estimates; see for instance \cite{BMP1}.

\medskip

Theorem \ref{mainbc} in fact applies to more general Hamitonians of the form
\[
H(\theta,I) = h(I) + \e f(\theta,I)
\]
provided that one imposes some sufficiently strong hypotheses regarding the analyticity w.r.t.~$I$
and some appropriate twist condition;
such a result is obtained in \cite{BC}, which is in preparation. Precisely one needs to require that
$\del^2_I h (I),(\del^2_I h (I))^{-1}, {\del^2_I f(\theta,I)}\in \MM(\ell^\io_q)\cap \MM({\ell^\io})$ for
all $I$ in some ball $B_\rho$.
Then Theorem \ref{mainbc} holds almost verbatim with $\om(I_0)=\del_I h(I_0)$ and 
$\CCCC\in B_\rho$; in particular the measure estimates
requires $(\del^2_I h (I))^{-1}\in \MM(\ell^\io)$ as it happens in the present case; see Appendix \ref{misura}.

Then Theorem \ref{stimadeitempivera} should be easily generalizable also in such a case.
In fact in our proof we did not
take full advantage of the very special form of our Hamiltonian \eqref{ham}, but
 we used it only to obtain the third bound in \eqref{staltribound}.
We believe however that an analogous bound can be obtained also in the general case,
so the rest of the proof would be identical.

\zerarcounters 
\section{Functional setting and bounds} 
\label{poisson} 

Here we provide some useful structure and bounds. In this section we start from very general 
properties on $(r,s)$-analytic functions and then specify to $(r,s)$-regular functions.

First of all we state the following result, which extends classical Cauchy estimates, proved in
\cite{CMP} in the case action-independent case;
see also \cite{lore} for a general proof in the case $|\cdot|_\star = \|\cdot \|_{\ell^1}$,
which is in fact more technically challenging.

\begin{lemma}\label{cau}
If a function $\matF:\matP_r\to \ttV$ is $(r,s)$-analytic, then its differential $d\matF$
is an $(r',s')$-analytic map with values in $\MM(\TT,\ttV)$ for any $r'<r$, $s'<s$, with quantitative bounds
on the operator norm given by the Cauchy estimates
\begin{equation}\label{cauchy}
\|d\matF\|_{r',s',\MM(\TT,\ttV)} \le \frac{C}{\min\{r-r',s-s'\}} \|\matF\|_{r,s,\ttV}
\end{equation}
where $C=C(q)$ is a constant. In particular if $(\ttV,\|\cdot\|_\ttV)=(\RRR,|\cdot|)$ then $\MM=T^*\matP$.

\end{lemma}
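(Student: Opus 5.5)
Since $d\matF$ acts on $\TT=\ell^\io\times\ell^\io_q$, I split it into its angle and action parts and bound each term by term using the Lindstedt representation \eqref{formal}. Writing $d\matF[X]=\sum_j \del_{\theta_j}\matF\,X^{(\theta_j)}+\sum_j\del_{I_j}\matF\, X^{(I_j)}$, each coefficient has an explicit series: $\del_{\theta_j}\matF=\sum_{k,\nu}\ii\nu_j f_{k,\nu}I^ke^{\ii\theta\cdot\nu}$ and $\del_{I_j}\matF=\sum_{k,\nu}k_j f_{k,\nu}I^{k-e_j}e^{\ii\theta\cdot\nu}$. The Cauchy majorant $\und{d\matF}$ applied to $X$ with $\|X\|\le1$ (so $|X^{(\theta_j)}|\le 1$ and $|X^{(I_j)}|\le \jap{j}^{-q}=w_j$) is then bounded componentwise by replacing each coefficient by its modulus. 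Hence it suffices to estimate
\[
\sum_{k,\nu}\|f_{k,\nu}\|_\ttV\Big(\sum_j|\nu_j| + \sum_j k_j\,\frac{w_j}{r'w_j}\Big)e^{s'|\nu|_\star}(r'w)^k .
\]
Here I used $I^{k-e_j}\le (r'w)^{k}/(r'w_j)$ in majorant sense on $B_{r'}$.

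The angle part is handled as follows. Since $\jap{j}\ge1$, $\sum_j|\nu_j|\le|\nu|_\star$, so it is controlled by $|\nu|_\star e^{-(s-s')|\nu|_\star}e^{s|\nu|_\star}\le \frac{1}{e(s-s')}e^{s|\nu|_\star}$, using $xe^{-ax}\le 1/(ea)$.

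The action part is the heart of the argument and is where I expect the main obstacle. The factor is $\frac1{r'}\sum_j k_j=|k|/r'$, and we need to absorb it by $(r'/r)^{|k|}$. That is, $\frac{|k|}{r'}(r'/r)^{|k|}(rw)^k\le \frac{C}{r-r'}(rw)^k$, which follows from $n x^{n-1}\le 1/(1-x)\cdot C$ with $x=r'/r$: indeed $\frac{n}{r'}x^n=\frac{n x^{n-1}}{r}\le \frac{1}{r\,(1-x)}\cdot\frac{1}{e}\cdot C$. The delicate point is that the sum over $j$ is infinite, so the $\ell^\io_q$ weights must match exactly. They do, because the test vector component $X^{(I_j)}$ is weighted by $w_j$, which cancels the $1/w_j$ from differentiating $I_j^{k_j}$. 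This cancellation is precisely why the norm uses $(rw)^k$ with $w=\jap{j}^{-q}$.

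Combining the two parts gives \eqref{cauchy} with $C$ absolute; any $q$-dependence enters only through the normalization of the norms. Moreover the same computation, done pointwise in $(\theta,I)\in\matP_{r'}$, shows that $d\matF$ is represented by a totally convergent Lindstedt series with values in $\MM(\TT,\ttV)$, which is the required $(r',s')$-analyticity. For $\ttV=\RRR$, bounded linear functionals on $\TT$ with absolutely summable matrix entries are exactly elements of $T^*\matP$, giving the last claim.
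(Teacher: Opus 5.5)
Your argument is correct. It is the standard coefficient-wise Cauchy estimate, which the paper does not reproduce and instead cites from \cite{CMP,lore}. The angle part is controlled by $\sum_j|\nu_j|\le|\nu|_\star$ and $xe^{-ax}\le 1/(ea)$, and the action part by the cancellation of $w_j$ between $|X^{(I_j)}|\le w_j$ and the weight $(r'w)^{k-e_j}$ followed by $nx^{n-1}\le 1/(1-x)$; together these give $\|d\matF\|_{r',s',\MM(\TT,\ttV)}\le\big(\frac{1}{e(s-s')}+\frac{1}{r-r'}\big)\|\matF\|_{r,s,\ttV}$ with an absolute constant.

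Two cosmetic points. The factor $1/e$ in your action bound is spurious, since $\frac{n}{r'}x^n\le\frac{1}{r-r'}$ already holds with constant $1$. The phrase ``majorant sense on $B_{r'}$'' should be read as a statement about the Lindstedt weight of the reindexed monomial $I^{k-e_j}$; the triangle inequality then absorbs the fact that several pairs $(k,j)$ produce the same monomial.
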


Similarly
one has the following result, proved in \cite{lore} in the case $|\cdot|_\star = \|\cdot \|_{\ell^1}$,
which extends classical results on flows and pull-backs in finite dimension. The proof in our
case can be obtained essentially following \cite{lore} word by word, and it is in fact easier.

\begin{lemma}\label{flussi}
Given a $(r,s)$-analytic vector field $X$, for any $\rho\in(0,r/2)$ and any $\s\in(0,s/2)$
 there exist $T_0>0$ and an analytic diffeomorphism  
\begin{equation}\label{riflussi}
\Phi_X^t : \matP_{r-\rho} \to \matP_{r}
\end{equation}
analytic in $t\in (-T_0,T_0))$ such that for any $\matF:\matP_{r}\to \ttV$ $(r,s)$-analytic
\ and any $t\in(-T_0,T_0)$ one has that
$(\Phi_X^t)^* \matF$ is $(r',s')$-analytivc
with bounds 
\begin{equation}\label{stobound}
\|(\Phi^t_X)^* \matF - \matF\|_{r-\rho,s-\s,\ttV} \le \frac{C |t|}{\min\{\rho/r,\s/s\}} \|X\|_{r,s,T\matP}\|\matF\|_{r,s,\ttV}.
\end{equation}
\end{lemma}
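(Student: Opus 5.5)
We construct $\Phi^t_X$ by a Picard iteration on Lindstedt series. The key object is the Lie series for the pull-back, whose terms are controlled by the Cauchy estimates of Lemma \ref{cau}.

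\emph{Existence of the flow.} Fix $\rho\in(0,r/2)$, $\s\in(0,s/2)$. Write $\Phi^t_X(\theta,I)=(\theta+\xi(t;\theta,I),\,I+\eta(t;\theta,I))$. The pair $(\xi,\eta)$ solves the fixed point equation
\[
\xi(t)=\int_0^t X^{(\theta)}\circ\Phi^\tau_X\,d\tau,\qquad \eta(t)=\int_0^t X^{(I)}\circ\Phi^\tau_X\,d\tau .
\]
We look for $(\xi,\eta)$ in the Banach space of $(r-\rho,s-\s)$-analytic maps with values in $\ell^\io\times\ell^\io_q$, equipped with the weighted norm $\|\xi\|/s+\|\eta\|/r$ (the same normalization as in Definition \ref{campovett}). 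The norm \eqref{converge!} is a weighted $\ell^1$ norm on coefficients, so composition is controlled by majorants. If $\|\eta\|_{\ell^\io_q}<\rho$ and $|{\rm Im}\,\xi_j|<\s\sqrt{\jap j}$, then substituting $(\theta+\xi,I+\eta)$ into a Lindstedt series of $X$ converges in the $(r-\rho,s-\s)$ norm. The reason is that $e^{\ii\nu\cdot\xi}$ is bounded by $e^{\s|\nu|_\star}$ when $\|\xi\|$ is measured in the $\sqrt{\jap j}$-weighted sense induced by the $s$-norm. Likewise $(I+\eta)^k$ is majorized by $((r-\rho)w+\rho w)^k$, because $\|\eta\|_q<\rho$ means componentwise $|\eta_j|\le \rho w_j$.

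The Lipschitz constant of the integral map comes from Lemma \ref{cau}: $\|dX\|_{\MM}\lesssim \|X\|_{r,s}/\min\{\rho/r,\s/s\}$. Hence the map is a contraction on a ball of radius proportional to $|t|\,\|X\|_{r,s,T\matP}$, provided $|t|<T_0:=c\min\{\rho/r,\s/s\}/\|X\|_{r,s,T\matP}$. Analyticity in $t$ follows because the iterates are polynomial in $t$ and converge uniformly. The diffeomorphism property follows from $\Phi^{-t}_X$, which exists by the same argument applied to $-X$, together with the group law on overlapping domains.

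\emph{The bound \eqref{stobound}.} Since $\frac{d}{dt}(\Phi^t_X)^*\matF=(\Phi^t_X)^*(d\matF[X])$, we have
\[
(\Phi^t_X)^*\matF-\matF=\int_0^t (\Phi^\tau_X)^*\big(d\matF[X]\big)\,d\tau .
\]
Lemma \ref{cau}, applied with loss $(\rho/2,\s/2)$, gives $\|d\matF[X]\|_{r-\rho/2,s-\s/2}\le C\|X\|_{r,s,T\matP}\|\matF\|_{r,s}/\min\{\rho/r,\s/s\}$. The weights $1/s$ and $1/r$ in the vector field norm are exactly what convert the loss from $\min\{\rho,\s\}$ to the relative quantity $\min\{\rho/r,\s/s\}$. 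The remaining loss of $(\rho/2,\s/2)$ is absorbed by the composition estimate of the first step, which shows that $(\Phi^\tau_X)^*$ is bounded from the $(r-\rho/2,s-\s/2)$-norm to the $(r-\rho,s-\s)$-norm uniformly for $|\tau|<T_0$. Integrating over $\tau$ yields the factor $|t|$.

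\emph{Main obstacle.} The delicate step is the composition estimate, namely that $g\circ(\mathrm{id}+(\xi,\eta))$ is again $(r',s')$-analytic with norm at most $\|g\|_{r,s}$. In the non-separable setting of Remark \ref{rappresento}, pointwise holomorphy is not enough. One must expand the composed map as a genuine Lindstedt series and check absolute convergence coefficientwise, using $|\nu|_\star$ subadditivity and the majorant inequality $\sum|f_{k,\nu}|e^{s|\nu|_\star}(rw)^k\ge$ the corresponding series evaluated at the shifted point. I would handle this by Taylor expanding $e^{\ii\nu\cdot\xi}$ and $(I+\eta)^k$ and summing with majorant series, as in \cite{lore}. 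This is simpler here because $|\nu|_\star$ has weights $\sqrt{\jap j}$ that match the strip widths of $\TTT^\ZZZ_s$.
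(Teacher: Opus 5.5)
Your proposal is correct and is essentially the standard argument the paper intends; the paper gives no proof of its own and only defers to \cite{lore}. The ingredients match: Picard iteration in the Lindstedt coefficient norms with the displacement controlled by $\|\xi\|_{r-\rho,s-\s,\ell^\io}\le \s/2$ and $\|\eta\|_{r-\rho,s-\s,\ell^\io_q}\le\rho/2$, the majorant composition estimate, Cauchy estimates taken separately in $\partial_\theta$ and $\partial_I$ to get the relative loss $\min\{\rho/r,\s/s\}$, and the Duhamel identity $(\Phi^t_X)^*\matF-\matF=\int_0^t(\Phi^\tau_X)^*(d\matF[X])\,d\tau$. The one slip is your claim that the Picard iterates are polynomial in $t$ (already $\int_0^t X\circ(\mathrm{id}+\tau X)\,d\tau$ is not); analyticity in $t$ still follows by running the same contraction for complex $|t|<T_0$, which the coefficient-norm estimates allow verbatim, and taking the uniform limit of iterates that are holomorphic in $t$.
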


\begin{lemma}\label{sreg}
If $H$ depends on the angles and is $s$-analytic then it is $(r,s-\s)$-regular for any $r>0$
and any $\s\in(0,s)$, with the bound
\[
\| H\|_{r,s-\s} \le \frac{C}{r\s^{2q}}\|H\|_{s,\RRR},
\]
for some constant $C=C(q)$.
\end{lemma}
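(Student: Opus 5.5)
We compute the Hamiltonian vector field of $H(\theta)=\sum_\nu H_\nu e^{\ii\theta\cdot\nu}$ directly and bound the two components of the norm in Definition \ref{reghami} separately. Since $H$ does not depend on $I$, we have $X_H^{(\theta)}=\del_I H=0$. Hence only the action component $X_H^{(I_j)}=-\del_{\theta_j}H=-\sum_\nu \ii\nu_j H_\nu e^{\ii\theta\cdot\nu}$ contributes. It is a Lindstedt series with $k=0$ only, so the factor $(rw)^k$ equals $1$, and
\[
\|H\|_{r,s-\s}=\frac1r\,\|X_H^{(I)}\|_{r,s-\s,\ell^\io_q}=\frac1r\sum_{\nu}\sup_{j\in\ZZZ}\jap{j}^q|\nu_j|\,|H_\nu|\,e^{(s-\s)|\nu|_\star}.
\]
This already gives the factor $1/r$ and shows that the statement holds for every $r>0$. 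It remains to prove that
\[
\sup_{j}\jap{j}^q|\nu_j|\,e^{-\s|\nu|_\star}\le C\s^{-2q}\qquad\text{uniformly in }\nu\in\ZZZ^\ZZZ_f .
\]
Once this is established, summing over $\nu$ yields $\|H\|_{r,s-\s}\le C r^{-1}\s^{-2q}\sum_\nu|H_\nu|e^{s|\nu|_\star}=Cr^{-1}\s^{-2q}\|H\|_{s,\RRR}$.

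For the key inequality, fix $j$ and use $|\nu|_\star\ge\sqrt{\jap{j}}\,|\nu_j|$, which reduces it to a one-variable bound. Set $x:=\sqrt{\jap{j}}\,|\nu_j|$. Since $|\nu_j|\ge1$ whenever the term is nonzero, we have $\jap{j}\le x^2$, so
\[
\jap{j}^q|\nu_j|=\jap{j}^{q-1/2}x\le x^{2q-1}\,x=x^{2q}.
\]
The elementary estimate $x^{2q}e^{-\s x}\le (2q/(e\s))^{2q}=C(q)\s^{-2q}$ then finishes the argument. This is where the exponent $2q$ comes from: the weight $\sqrt{\jap{j}}$ in $|\cdot|_\star$ must absorb the polynomial weight $\jap{j}^q$ of $\ell^\io_q$.

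The main obstacle is this weight-matching step, and the point requiring care is that the bound must be uniform in both $j$ and $\nu$. This uniformity is exactly what using the single term $\sqrt{\jap{j}}|\nu_j|$ of $|\nu|_\star$, together with $|\nu_j|\ge1$, provides. One also has to note that $H$ is analytic in the sense of Definition \ref{bestie}, so that it is regular in the sense of Definition \ref{reghami}. This follows because $\|H\|_{r,s-\s,\RRR}\le\|H\|_{s,\RRR}$ trivially.
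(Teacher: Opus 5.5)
Your proof is correct and follows the paper's argument. As there, you use that $X_H=(0,-\del_\theta H)$, so only the action component contributes, with the factor $1/r$. You then factor out $\|H\|_{s,\RRR}$ and bound $\sup_{j,\nu}\jap{j}^q|\nu_j|e^{-\s|\nu|_\star}$ via $|\nu|_\star\ge\sqrt{\jap{j}}|\nu_j|$ for $\nu_j\neq0$.

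Your version is a bit more careful than the paper's in two places. You keep the sum over $\nu$ outside the supremum over $j$, as Definition \ref{bestie} prescribes. You also make explicit the elementary estimate $x^{2q}e^{-\s x}\le (2q/(e\s))^{2q}$ that produces the factor $\s^{-2q}$.
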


\prova
By definition $X_H=(0,-\del_\theta H)$ and hence
\[
\begin{aligned}
\| H\|_{r,s-\s} &= \|X_{H}\|_{r,s-\s,T\matP} = \frac{1}{r} \|\del_\theta H\|_{s-\s,\ell^{\io}_q} \\
&\le
\frac{1}{r} \sup_{j\in\ZZZ} \jap{j}^q \sum_{\nu\in\ZZZ^\ZZZ_f} |\nu_j | |H_\nu| e^{(s-\s)|\nu|_\star}\\
&\le  \frac{1}{r} \sup_{j\in\ZZZ} \jap{j}^q \sum_{\substack{\nu\in\ZZZ^\ZZZ_f  \\ \nu_j\ne0}}
	|\nu_j | |H_\nu| e^{(s-\s)|\nu|_\star}\\
& \le\frac{1}{r} \|H\|_{s,\RRR}\sup_{j\in\ZZZ}\sup_{\substack{\nu\in\ZZZ^\ZZZ_f  \\ \nu_j\ne0}}
	\jap{j}^q|\nu_j | e^{-\s|\nu|_\star} 
\end{aligned}
\]
so that the bound follows by recalling that $|\nu|_\star> \sqrt{\jap{j}}|\nu_j|$ since $\nu_j\ne0$.
\EP

Given $K\in\NNN$ and $H\in\matH_{r,s}$ we define the ultraviolet cutoff as
\begin{equation}\label{uv}
\Pi_K H(\theta,I) := \sum_{|\nu|_\star \le K} H_{\nu}(I)e^{i\theta\cdot\nu},
\qquad\qquad
\Pi_K^\perp := (\uno - \Pi_K)
\end{equation}

We have the following result.

\begin{lemma}\label{genoveffa}
	If $H\in \matH_{r,s}$, then for all $s'\le s$  and $r'\le r$ one has
	\begin{equation*}
		\norm{H}_{r',s'} \le \max\{\frac{s}{s'},\frac{r}{r'}\} \norm{H}_{r,s}\,.
	\end{equation*}
	If moreover $H\in \matH_{r,s}$ has minimal degree $d\ge 1$ in $y$ then for all $r'\le r$ one has
	\begin{equation*}
		\norm{H}_{r',s} \le \pa{\frac{{r'}}{r}}^{d-1} \norm{H}_{r,s}\,.
	\end{equation*}
	Finally for any $s'\le s$ and any $K\in\NNN$ one has
	\[
	\|\Pi^{\perp}_K H \|_{r,s'} \le e^{-(s-s')K}\big( \frac{s}{s'}\big) \| H\|_{r,s}.
	\]
\end{lemma}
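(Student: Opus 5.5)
The plan is to work directly from Definitions \ref{bestie}, \ref{campovett} and \ref{reghami}. We write
\[
\|H\|_{r,s}=\frac{1}{s}\,\|X_H^{(\theta)}\|_{r,s,\ell^\io}+\frac{1}{r}\,\|X_H^{(I)}\|_{r,s,\ell^\io_q},
\]
and each of the two component norms is a sum over $(k,\nu)$ of $\|(X_H)_{k,\nu}\|_{\ttV}\,e^{s|\nu|_\star}(rw)^k$. All three bounds will follow by comparing the weights $e^{s|\nu|_\star}(rw)^k$ coefficient by coefficient. The Taylor--Fourier coefficients of $X_H$ themselves do not change when we vary $(r,s)$.

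\emph{First bound.} For $r'\le r$ and $s'\le s$, each weight is monotone: $e^{s'|\nu|_\star}(r'w)^k\le e^{s|\nu|_\star}(rw)^k$. Hence $\|X_H^{(\theta)}\|_{r',s',\ell^\io}\le \|X_H^{(\theta)}\|_{r,s,\ell^\io}$, and the same holds for the $I$-component. Dividing by $s'$ and $r'$ respectively gives
\[
\|H\|_{r',s'}\le \frac{s}{s'}\cdot\frac{1}{s}\,\|X_H^{(\theta)}\|_{r,s,\ell^\io}+\frac{r}{r'}\cdot\frac{1}{r}\,\|X_H^{(I)}\|_{r,s,\ell^\io_q},
\]
and the right-hand side is at most $\max\{s/s',r/r'\}\|H\|_{r,s}$.

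\emph{Second bound.} If $H$ has minimal degree $d$ in $y$, then $X_H^{(\theta)}=\del_y H$ contains only monomials with $|k|\ge d-1$, while $X_H^{(I)}=-\del_\theta H$ contains only monomials with $|k|\ge d$. We use the identity $(r'w)^k=(r'/r)^{|k|}(rw)^k$. For the $\theta$-component this gives the factor $(r'/r)^{d-1}$, while the divisor $s$ is unchanged. For the $I$-component it gives the factor $(r'/r)^{d}$, and after dividing by $r'$ instead of $r$ we get
\[
\frac{1}{r'}\Big(\frac{r'}{r}\Big)^{d}\,\|X_H^{(I)}\|_{r,s,\ell^\io_q}=\Big(\frac{r'}{r}\Big)^{d-1}\frac{1}{r}\,\|X_H^{(I)}\|_{r,s,\ell^\io_q}.
\]
Summing the two components yields the claim.

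\emph{Third bound.} The cutoff $\Pi_K$ acts diagonally on the Fourier index $\nu$, so it commutes with $\del_\theta$ and $\del_I$. Therefore $X_{\Pi_K^\perp H}$ is obtained from $X_H$ by keeping only the coefficients with $|\nu|_\star>K$. On these coefficients
\[
e^{s'|\nu|_\star}=e^{-(s-s')|\nu|_\star}e^{s|\nu|_\star}\le e^{-(s-s')K}e^{s|\nu|_\star},
\]
so both component norms at $(r,s')$ are bounded by $e^{-(s-s')K}$ times the corresponding norms at $(r,s)$. The only remaining change is that the $\theta$-component is divided by $s'$ instead of $s$, which costs a factor $s/s'\ge1$. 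This gives the stated bound.

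There is no real obstacle in this lemma. The only point needing care is the bookkeeping of the normalizations $1/s$ and $1/r$ in Definition \ref{campovett}, together with the fact that the $\theta$- and $I$-components of $X_H$ have different minimal degrees.
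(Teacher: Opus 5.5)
Your proposal is correct and follows the same route as the paper: a coefficient-by-coefficient comparison of the weights $e^{s|\nu|_\star}(rw)^k$ in the norm of $X_H$, keeping track of the normalizations $1/s$ and $1/r$. The paper writes out only the cutoff bound and calls the other two direct consequences of the definitions, whereas you also carry out that bookkeeping, including the different minimal degrees $d-1$ and $d$ of the $\theta$- and $I$-components of $X_H$.
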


\prova
The bounds follow as a direct consequence of the Definitions.
We prove only the last one. In fact
\[
\begin{aligned}
\|\Pi^{\perp}_K H \|_{r,s'} &= \big\|\sum_{|\nu|_\star > K} H_{\nu}(I)e^{i\theta\cdot\nu}\big\|_{r,s'} \\
&=  \sum_{k\in\NNN^\ZZZ_f}\sum_{\substack{\nu\in\ZZZ^\ZZZ_f \\ |\nu|_\star > K}} 
\|(X_H)_{k,\nu}\|_{r,s',T\matP} e^{s'|\nu|_\star}(r w)^k\\
&\le  e^{-(s-s')K}\big( \frac{s}{s'}\big) \sum_{k\in\NNN^\ZZZ_f}
\sum_{\substack{\nu\in\ZZZ^\ZZZ_f \\ |\nu|_\star > K}} \|(X_H)_{k,\nu}\|_{r,s,T\matP} e^{s|\nu|_\star}(r w)^k,
\end{aligned}
\]
so the assertion follows.
\EP

Given any $H,K\in\matH_{r,s}$, the symplectic $2$-form naturally induces the Poisson brackets
\[
\{H,K \} := \Omega[X_H,X_K],
\]
so that, as a consequence of the Cauchy estimates in Lemma \ref{cau} we have the following result.

\begin{lemma}[Poisson brackets and Hamiltonian flow]\label{hamflow}
For any $\rho\in(0,r/2)$, and $\s\in(0,s/2)$, and any $H,K\in\matH_{r,s}$
one has that  $\{H,K\}$ is $(r-\rho,s-\s)$-regular  with the bound
	\begin{equation}\label{mojoodo}
	\|\{H,K\}\|_{r-\rho,s-\s} \le \frac{C}{\delta}\|H\|_{r,s}\|K\|_{r,s}\,, \qquad \delta= \min\{ \frac{\rho}{r},\frac{\s}{s}\}.
	\end{equation}
Moreover, for any $S\in\matH_{r,s}$ with 
	\begin{equation}\label{stima generatrice}
		\norm{S}_{r,s} \leq \frac{\delta }{2 C  e} \,
	\end{equation} 
 the time $1$-Hamiltonian flow 	
	$\Phi^1_S: \matP_{r-\rho}\to
	\matP_{r}$  is well defined, analytic and symplectic.
	Finally, for any $H\in \matH_{r,s}$
	we have that
	$H\circ\Phi^1_S= e^{\set{S,\cdot}} H\in\matH_{r-\rho,s-\s}$ with
	\begin{equation}
		\label{tizio}
		\norm{e^{\{S,\cdot\}} H}_{r-\rho,s-\s}  \le 2 \norm{H}_{r,s}\,,
	\end{equation}
	and more generally for any $h\in\NNN$ and any sequence  $\{c_k\}_{k\in\NNN}$ with $| c_k|\leq 1/k!$, we have 
	\begin{equation}\label{brubeck}
		\norm{\sum_{k\geq h} c_k {\rm ad}^k_S\pa{H}}_{r-\rho,s-\s} \le 
		2 \|H\|_{r,s} \big(\frac{eC}{\delta}\|S\|_{r,s}\big)^h
		\,,
	\end{equation}
	where  ${\rm ad}_S\pa{\cdot}:= \set{S,\cdot}$.
\end{lemma}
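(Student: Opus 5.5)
The plan is to reduce everything to statements about the Hamiltonian vector fields, since $\|\cdot\|_{r,s}$ is by Definition \ref{reghami} the norm of $X_H$ in the sense of Definition \ref{campovett}.

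\textbf{Poisson bracket.} I would start from the identity $X_{\{H,K\}}=[X_H,X_K]$, up to the sign convention, which holds component-wise because $\Omega$ is constant. Writing the commutator as $dX_K[X_H]-dX_H[X_K]$, I would estimate each term with Lemma \ref{cau}. Apply it to $\matF=X_K$, viewed as a map with values in $\TT$ with the weighted norm, shrinking the domain from $(r,s)$ to $(r-\rho,s-\s)$. The operator $dX_K$ then lies in $\MM(\TT,\TT)$, and its norm is bounded by $C\,\|X_K\|/\min\{\rho,\s\}$ after rescaling. The rescaling works as follows. The $\theta$-components of $X$ are divided by $s$ and the $I$-components by $r$, so the derivatives $\partial_\theta$ cost $1/\s$ and $\partial_I$ cost $1/\rho$. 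Once everything is measured in the rescaled norm, the product of the two normalizations gives exactly the factor $1/\delta$ with $\delta=\min\{\rho/r,\s/s\}$. Majorant norms are submultiplicative: the Cauchy majorant of a product series is bounded by the product of the majorants. Hence $\|dX_K[X_H]\|\le \|dX_K\|_{\MM}\|X_H\|$, and the bound \eqref{mojoodo} follows. I would also check that $\{H,K\}$ is itself analytic, which is clear from the series.

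\textbf{Flow and Lie series.} For the flow, I would apply Lemma \ref{flussi} to $X_S$. Under the smallness \eqref{stima generatrice} the time $T_0$ can be taken larger than $1$, by the standard scaling argument: the existence time is of order $\delta/\|X_S\|$. Symplecticity comes from $\frac{d}{dt}(\Phi^t_S)^*\Omega=(\Phi^t_S)^*\mathcal L_{X_S}\Omega=0$, which is legitimate here since $\Omega$ is constant and $X_S$ is analytic. For $H\circ\Phi^1_S$, I would use the Lie series $\sum_k \frac{1}{k!}{\rm ad}_S^kH$. The key estimate is the standard nested-domain one: split $(\rho,\s)$ into $k$ equal steps and apply \eqref{mojoodo} $k$ times with $\delta/k$. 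This gives
\[
\|{\rm ad}^k_S H\|_{r-\rho,s-\s}\le \Big(\frac{Ck}{\delta}\Big)^k\|S\|^k_{r,s}\|H\|_{r,s}.
\]
Then $k^k/k!\le e^k$ yields terms bounded by $\big(\frac{eC}{\delta}\|S\|_{r,s}\big)^k\|H\|_{r,s}$. By \eqref{stima generatrice} the ratio is at most $1/2$, so the tail from $h$ sums to at most $2(\cdot)^h\|H\|$, which is \eqref{brubeck}. The case $h=0$, $c_k=1/k!$ gives \eqref{tizio}. Convergence of the series in $\matH_{r-\rho,s-\s}$, together with the fact that $t\mapsto H\circ\Phi^t_S$ solves $\partial_t F=\{S,F\}$ with analytic dependence on $t$, identifies the sum with $H\circ\Phi^1_S$.

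\textbf{Main obstacle.} The delicate point is the first step: showing that $dX_K[X_H]$ lies in the majorant class with the rescaled bound. This relies on the matrix (Cauchy majorant) form of Lemma \ref{cau} in the weighted spaces $\ell^\io$ and $\ell^\io_q$, and on tracking the weights $w$ correctly. I would handle it by working directly with coefficients. The coefficient of $I^k e^{\ii\theta\cdot\nu}$ in $\partial_{I_j}X_K\cdot X_H^{(I_j)}$ is controlled by $k_j (rw)^{k}$-type factors. These are absorbed using $k_j(1-\rho/r)^{|k|}\lesssim r/\rho$. In the same way, $|\nu_j|e^{-\s|\nu|_\star}\lesssim 1/\s$ uses $|\nu|_\star\ge|\nu_j|$.
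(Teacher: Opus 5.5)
Your proposal is correct and follows the paper's route. The paper likewise writes $X_{\{H,K\}}=[X_H,X_K]=dX_H[X_K]-dX_K[X_H]$ and applies the Cauchy estimates of Lemma \ref{cau} with values in $T\matP$ to get \eqref{mojoodo}; for the flow and Lie-series bounds it only cites the standard argument (via \cite{BMP}), which is the nested-domain argument you sketch. One small bookkeeping point: the rescaled norm $\|\cdot\|_{r,s}$ is not monotone when the domain shrinks, since Lemma \ref{genoveffa} only gives a factor $\max\{s/s',r/r'\}\le 2$, so in the $k$-fold iteration you cannot simply reuse $\|S\|_{r,s}$ at each intermediate domain; the resulting factor $2^k$ is absorbed by replacing $C$ with $2C$ throughout.
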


\begin{proof}
	Recalling that $X_{\{H,K\}}=[X_H,X_K]= dX_H[X_K] - dX_K[X_H]$,
	the Cauchy estimates \eqref{cauchy} hold with $\ttV= T\matP$, so we deduce \eqref{mojoodo}.
	The other bounds are rather standard of this fact; see for instance \cite{BMP}.
\end{proof}

As a consequence of Theorem \ref{mainbc} we deduce the following bounds.

\begin{lemma}\label{altribound}
Under the hypotheses of Theorem \ref{mainbc} one has
	\begin{equation}\label{staltribound}
	\begin{aligned}
& \|  d \al ^T\|_{ 3s/2; \mathcal M(\ell^\io_q)} \le C\e\,,\qquad
 \|  d \al \|_{ 3s/2; \mathcal M(\ell^\io)} \le C\e\,\\
& \|  d A ^T\|_{ 3s/2; \mathcal M(\ell^\io,\ell^\io_q)} \le C\e,\qquad
\| \| \del_\f {d} \al[\cdot]\|_{3s/2;\ell^\io_q} \|_{\MM(\ell^\io)} < C\e.
 \end{aligned}
\end{equation}
\end{lemma}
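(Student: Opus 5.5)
The first three bounds follow from the $2s$-analyticity bounds \eqref{stibound} via the Cauchy estimates of Lemma \ref{cau}, applied in the angle variable only, with loss $s/2$. The fourth needs the special structure of \eqref{ham}.

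\textbf{First three bounds.} Write $\al(\f)=\sum_{\nu}\al_\nu e^{\ii\f\cdot\nu}$ with $\al_\nu\in\ell^\io$, so that $\sum_\nu\|\al_\nu\|_{\ell^\io}e^{2s|\nu|_\star}\le C\e$. The matrix of $d\al$ has entries $(d\al)^j_k=\sum_\nu \ii\nu_k(\al_\nu)_j e^{\ii\f\cdot\nu}$. Hence its Cauchy majorant satisfies, entrywise,
\[
\sum_k |(d\al)^j_k|\,|w_k| \le \sum_\nu |(\al_\nu)_j| \sum_k |\nu_k|\,|w_k|.
\]
For $w\in\ell^\io$, we use $\sum_k|\nu_k|\le|\nu|_\star$ and $|\nu|_\star e^{-s|\nu|_\star/2}\le C/s$. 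Taking the $3s/2$ weighted sum then gives $\|d\al\|_{3s/2;\MM(\ell^\io)}\le C\e/s$.

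For $d\al^T$ acting on $\ell^\io_q$, the majorant is
\[
\jap{k}^{q}\sum_j|\nu_k|\,|(\al_\nu)_j|\,\jap{j}^{-q}|w_j| .
\]
Since $q>1$, $\sum_j\jap{j}^{-q}<\io$. We then need $\jap{k}^q|\nu_k|e^{-s|\nu|_\star/2}$ bounded. This is exactly the computation of Lemma \ref{sreg}, using $|\nu|_\star\ge\sqrt{\jap{k}}|\nu_k|$, and gives the factor $C s^{-2q}$.

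The bound on $dA^T$ from $\ell^\io$ to $\ell^\io_q$ is treated the same way. The $\ell^\io_q$ norm of $A_\nu$ controls $\jap{j}^{-q}$ decay in $j$, which makes the sum over $j$ converge. The weight $\jap{k}^q|\nu_k|$ is again absorbed by $e^{-s|\nu|_\star/2}$. All constants depend on $s,q$ and are absorbed into $C$.

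\textbf{Fourth bound.} This is where the form of \eqref{ham} is used, and I expect it to be the main obstacle. The object is the second derivative $\del_{\f_k}\del_{\f_i}\al_j$, contracted in $i$ with a vector and measured in $\ell^\io_q$ in the index $k$. A direct double Cauchy estimate gives a weight $\jap{k}^q|\nu_k||\nu_i|$, but with no decay in $j$ to sum, so the angle structure alone does not suffice. Instead I would use the invariance equation \eqref{soluzione}. Differentiating it in $t$ yields
\[
\om\cdot\del_\f A=-\e\,\del_\theta f(\f+\al),\qquad \om\cdot\del_\f\al+\om= \om_0+Q[I_0+A],
\]
so that $\om\cdot\del_\f\al = Q[A]$ by the definition of $\om$. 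Thus $\al$ is expressed through $Q$ and $A$, up to inverting $\om\cdot\del_\f$ with its small divisors. The plan is to express $\del_\f d\al$ through $Q\,\del_\f dA$. Since $Q\in\MM(\ell^\io_q)$ and $A$ takes values in $\ell^\io_q$, this supplies the $\jap{}^{-q}$ decay needed. The remaining factor $\jap{k}^q|\nu_k||\nu_i|$ is then absorbed by $e^{-s|\nu|_\star/2}$ as in Lemma \ref{sreg}.

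The delicate point is that inverting $\om\cdot\del_\f$ costs the Bourgain small divisors. I would avoid this by working directly on the identity $\om\cdot\del_\f\al=Q[A]$ together with the bounds on $A$ already encoded in the $2s$ norm from \cite{CGP}. If this fails, the fallback is to quote the small-divisor estimates of \cite{CMP}, which lose only a factor $e^{C s^{-3}}$ absorbed by the gap between $2s$ and $3s/2$.
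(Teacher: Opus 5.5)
Your first two bounds follow the paper's argument. For the third bound you argue directly from $\|A\|_{2s;\ell^\io_q}\le C\e$, using $\sup_k\jap{k}^q|\nu_k|\sum_j|(A_\nu)_j|\le C|\nu|_\star^{2q}\|A_\nu\|_{\ell^\io_q}$. This is correct and simpler than the paper's route, which goes through $A=Q^{-1}[\om\cdot\del_\f\al]$.

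The gap is in the fourth bound: the obstruction you identify does not exist. In $\|\|\del_\f d\al[\cdot]\|_{3s/2;\ell^\io_q}\|_{\MM(\ell^\io)}$ the matrix is $(j,i)\mapsto\|\del_\f\del_{\f_i}\al_j\|_{3s/2;\ell^\io_q}$. The input index $i$ is paired with an $\ell^\io$ vector, and the $\MM(\ell^\io)$ norm takes a supremum over $j$, not a sum. So you never need decay in $j$, and the plain double Cauchy estimate closes:
\[
\sup_{j}\sum_{i}\|\del_\f\del_{\f_i}\al_j\|_{3s/2;\ell^\io_q}\le\sum_{\nu}\Big(\sup_k\jap{k}^q|\nu_k|\Big)\Big(\sum_i|\nu_i|\Big)\|\al_\nu\|_{\ell^\io}e^{\frac{3s}{2}|\nu|_\star}\le\frac{C}{s^{2q+1}}\|\al\|_{2s;\ell^\io}.
\]
This uses $\jap{k}^q|\nu_k|\le|\nu|_\star^{2q}$ (as in Lemma \ref{sreg}) and $\sum_i|\nu_i|\le|\nu|_\star$. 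This is what the paper means by ``Cauchy estimates'', and no property of \eqref{ham} enters. The index $j$ is summed only when the bound is used in Lemma \ref{sympatica}. There it is contracted against $(\uno+d\al)^{-T}[y]\in\ell^\io_q$, and that vector supplies the $\jap{j}^{-q}$ decay, since $q>1$.

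What you give in its place is a plan, not a proof. Expressing $\del_\f d\al$ through $Q\,\del_\f dA$ while avoiding the inversion of $\om\cdot\del_\f$ cannot work: the identity $\om\cdot\del_\f\al=Q[A]$ determines $\al-\av{\al}$ only after inverting $\om\cdot\del_\f$. Your fallback would work if carried out. Since $\om\in\DD_\g$, the small divisors cost only a finite factor depending on $\g$ and $s$ for an analyticity loss of $s/4$. This gives $\|\al-\av{\al}\|_{7s/4;\ell^\io_q}\le C\e$, the stronger fact recorded in the Remark after Theorem \ref{bibi}. But you leave it conditional, and it is not needed for this lemma.
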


\prova
We only prove the first bound in \eqref{staltribound}: the second follows by Cauchy estimates,
the third follows from the fact that $A= Q^{-1}[\om\cdot \del_\f \al]$ together with the first bound,
while the last is again a consequence of the Cauchy estimates and the first bound.
By definition we need to prove that
\[
\sup_{j\in\ZZZ}\jap{j}^{q}\sum_{i\in\ZZZ} \|\del_{\f_j} \al_i\|_{3s/2,\RRR} \frac{1}{\jap{i}^q} < C\e.
\]
Expanding explicitly and using Lemma \ref{sreg} we obtain
\[
\begin{aligned}
\sup_{j\in\ZZZ}\jap{j}^{q}\sum_{i\in\ZZZ} \|\del_{\f_j} \al_i\|_{3s/2,\RRR} \frac{1}{\jap{i}^q} &\le
\sup_{j,i\in\ZZZ}\jap{j}^{q} \|\del_{\f_j} \al_i\|_{3s/2,\RRR}\sum_{i\in\ZZZ} \frac{1}{\jap{i}^q} \\
&  \le C \sup_{i\in\ZZZ}\|\del_{\f} \al_i\|_{3s/2,\ell^\io_q} \le \frac{C}{s^{2q}}\|\al\|_{2s,\ell^\io}
\end{aligned}
\]
so the assertion follows.
\EP

\zerarcounters 
\section{The isotropic torus} 
\label{toroiso} 

Recall that $u$ is the torus embedding in \eqref{toro}.

\begin{lemma}\label{isotropo}
The torus $\matT$ is isotropic in $\matP$, i.e.
\begin{equation}\label{iso}
u^*\Omega = 0.
\end{equation}
\end{lemma}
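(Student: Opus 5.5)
We follow the classical argument that an invariant torus carrying a Diophantine linear flow is isotropic, and supply the quantitative control needed in the infinite dimensional setting. Write the pulled-back form as
\[
u^*\Omega = \sum_{i<j} a_{ij}(\f)\, d\f_i\wedge d\f_j,\qquad a_{ij}(\f)=\Omega\big[\del_{\f_i}u(\f),\del_{\f_j}u(\f)\big],
\]
where $\del_{\f_i}u=(e_i+\del_{\f_i}\al,\del_{\f_i}A)$. By \eqref{simpl},
\[
a_{ij}=\del_{\f_i}A_j-\del_{\f_j}A_i+\sum_{k\in\ZZZ}\big(\del_{\f_i}\al_k\,\del_{\f_j}A_k-\del_{\f_j}\al_k\,\del_{\f_i}A_k\big).
\]

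\emph{Step 1: each $a_{ij}$ is a well-defined analytic function.} The sum over $k$ converges absolutely. Indeed $\del_{\f_i}\al\in\ell^\io$ and $\del_{\f_j}A\in\ell^\io_q$ with $q>1$, so $\sum_k|\del_{\f_i}\al_k||\del_{\f_j}A_k|\le \|\del_{\f_i}\al\|_\io\|\del_{\f_j}A\|_q\sum_k\jap{k}^{-q}$. Using Lemma \ref{cau}, Lemma \ref{sreg} and \eqref{stibound}, each $a_{ij}$ is a $(3s/2)$-analytic function of $\f$.

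\emph{Step 2: $u^*\Omega$ is closed.} Since $\Omega=\sum_k dI_k\wedge d\theta_k$ is exact, $\Omega=d\lambda$ with $\lambda=\sum_k I_k\,d\theta_k$. Then $u^*\lambda=\sum_j b_j\,d\f_j$ with
\[
b_j=\sum_k (I_0+A)_k\,(\delta_{kj}+\del_{\f_j}\al_k),
\]
which converges because $I_0+A\in\ell^\io_q$. One checks componentwise that $a_{ij}=\del_{\f_i}b_j-\del_{\f_j}b_i$, differentiating term by term, which is justified by the total convergence of the series. Hence $u^*\Omega=d(u^*\lambda)$ and the cocycle identity $\del_{\f_k}a_{ij}+\del_{\f_i}a_{jk}+\del_{\f_j}a_{ki}=0$ holds. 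A direct check of the expression above gives the same identity.

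\emph{Step 3: invariance.} The flow $\Phi^t_H$ is symplectic along the torus, and \eqref{soluzione} gives $(\Psi^t_\om)^*u^*\Omega=u^*(\Phi^t_H)^*\Omega=u^*\Omega$. To avoid using symplecticity of the flow on the whole of $\matP$, we instead differentiate \eqref{soluzione} at $t=0$, which gives $\om\cdot\del_\f u=X_H\circ u$. Then $\om\cdot\del_\f a_{ij}$ equals the Lie derivative of $u^*\Omega$ along $\om$, namely $d\big(\iota_\om u^*\Omega\big)$. Moreover $\iota_\om u^*\Omega=u^*(\iota_{X_H}\Omega)=u^*dH=d(H\circ u)$ by \eqref{unicop}, and this is closed. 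Therefore $\om\cdot\del_\f a_{ij}=0$ for all $i,j$.

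\emph{Step 4: conclusion by Fourier analysis.} Expanding $a_{ij}=\sum_\nu a_{ij,\nu}e^{\ii\f\cdot\nu}$, Step 3 gives $(\om\cdot\nu)\,a_{ij,\nu}=0$. Since $\om\in\DD_\g$, this forces $a_{ij,\nu}=0$ for every $\nu\neq0$, so each $a_{ij}$ is constant. An exact form $d(u^*\lambda)$ has zero average: the mean of $\del_{\f_i}b_j$ vanishes because $b_j$ is analytic on $\TTT^\ZZZ$, and its Fourier coefficient at $\nu=0$ of a derivative is zero. Hence $a_{ij}\equiv0$.

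\emph{Main obstacle.} The hardest part is justifying the infinite-dimensional manipulations: the convergence of $u^*\lambda$, the term-by-term differentiation, and the identification $d(u^*\lambda)=u^*\Omega$ for $1$-forms with infinitely many components. I would handle these by working with Fourier coefficients, where each component involves only finitely many derivatives, and by using the majorant bounds in Lemma \ref{altribound}, in particular $\|d\al^T\|_{\MM(\ell^\io_q)}$ and $\|dA^T\|_{\MM(\ell^\io,\ell^\io_q)}$, to obtain uniform summability.
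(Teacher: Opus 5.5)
Your proposal is correct and follows the paper's proof essentially step by step: closedness of $u^*\Omega$ from $u^*\Omega=d(u^*\lambda)$; Cartan's formula combined with $du[\om]=X_H\circ u$ and \eqref{unicop} to get $\om\cdot\del_\f a_{ij}=0$, hence constant coefficients; and zero average by exactness --- you just make explicit the Diophantine/Fourier step and the summability issues that the paper leaves implicit. One harmless sign slip: with linear part $\del_{\f_i}A_j-\del_{\f_j}A_i$, the quadratic sum in $a_{ij}$ should read $\sum_k(\del_{\f_i}A_k\,\del_{\f_j}\al_k-\del_{\f_j}A_k\,\del_{\f_i}\al_k)$ (you wrote its negative), which is what $\del_{\f_i}b_j-\del_{\f_j}b_i$ actually gives, so as written your componentwise check would fail by that sign.
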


\prova
First observe that we may write in coordinates
\[
u^*\Omega(\f) = \sum_{\substack{i,j\in\ZZZ \\ i < j}} (M_u(\f))_{i,j} d\f_i\wedge d\f_j
\]
and
\[
(u \circ \Psi^t_\om)^*\Omega (\f) =  (\Psi_\om^t)^*(u^*\Omega) = 
 \sum_{\substack{i,j\in\ZZZ \\ i < j}} (M_u(\f + \om t) )_{i,j}d \f_i \wedge d \f_j.
\]
We claim that $M_u\equiv0$. In fact, on the one hand one has
\[
L_\om[u^*\Omega] =  \left.\frac{d}{dt}\big( (\Psi_\om^t)^*(u^*\Omega) \big)\right|_{t=0} =
 \sum_{\substack{i,j\in\ZZZ \\ i < j}}\om\cdot\del_\f  (M_u(\f ))_{i,j} d \f_i \wedge d \f_j.
\]
On the other hand, since $d(u^*\Omega)=u^*d\Omega=0$, by Cartan formula \eqref{carnanazzo} we obtain
\begin{equation}\label{eccodoveserve}
L_\om(u^*\Omega) = d((u^*\Omega)[\om,\cdot]).
\end{equation}
Now, since $u$ is an invariant torus one has 
\begin{equation}\label{diffv}
du[\om]=\om\cdot\del_\f u = X_H(u) 
\end{equation}
and hence, using also \eqref{unicop}, we obtain
\begin{equation}\label{contazzo}
\begin{aligned}
u^*\Omega[\om,\cdot] &= \Omega[du[\om],du[\cdot]] = \Omega[ X_H(u), du[\cdot]] \\
&= dH(u)[du[\cdot]] = d(u^*H)[\cdot],
\end{aligned}
\end{equation}
so that clearly
\begin{equation}\label{contazzo2}
L_\om (u^*\Omega) = d(d(u^*H)) = 0.
\end{equation}
which means that $M_u$ is constant on $\TTT^\ZZZ$. Finally $M_u$ has zero average because
$u^*\Omega = d(u^* \lambda)$ with
\begin{equation}\label{pull}
(u^* \la)(\f) = \sum u_k(\f) d\f_k,
\end{equation}
so that
\[
(M_u)_{i,j}= \del_{\f_i}u_j - \del_{\f_j }u_i
\]
and the assertion follows.
\EP

Fix $I_0\in\CCCC$ and set
\begin{equation}\label{primo}
\Upsilon(\f,y)=(\Upsilon_\theta(\f,y),\Upsilon_I(\f,y)):=( \f + \al(\f) ,I_0 + A(\f) + (\uno+\mathtt d \al(\f))^{-T}[ y])
\end{equation}
where $\al,A$ are those given in \eqref{toro}.

In what follows, we shall systematically make use of the following facts.

\begin{enumerate}

\item If $f$ is a real valued  $(r,s)$-analytic function and $\matF$ is $(r,s)$-analytic function 
with values in $\ttV$, then $f\matF$  is $(r,s)$-analytic with values in  $\ttV$ and
$\| f\matF\|_{r,s;\ttV} \le C \|f\|_{r,s}\|\matF\|_{r,s;\ttV} $.

\item If $\Phi$ is  $(r,s)$-analytic with values in $\ell^\io_q$  with $\|\Phi\|_{r,s;\ell^\io_q}< R$ and 
$\matF$ is  an $(R,s)$-analytic function  with values in $\ttV$, then 
$\matG(\f,y):= \matF(\f,\Phi(\f,y))$ is  $(r,s)$ analytic with values in $\ttV$ and satisfying
\[
\|\matG\|_{r,s;\ttV} \le C \|\matF\|_{R,s;\ttV} 
\]

\item If $\| \alpha\|_{r,s;\ell^\io}< \sigma \ll 1$ and 
$\matF$ is  an $(r,s-\s)$-analytic function  with values in $\ttV$, then 
 then 
$\matG(\f,y):= \matF(\f+\al(\f),y)$ is  $(r,s)$ analytic with values in $\ttV$ and satisfying
\[
\|\matG\|_{r,s;\ttV} \le C \|\matF\|_{r,s-\s;\ttV} 
\]

\end{enumerate}

Note that while 1 and 2 are essentially a direct consequence of the definition (see for instance
\cite{lore}), 3 is less trivial, but it is proved for instance in \cite{MP,CMP}.

\begin{lemma}\label{sympatica}
For any $r>C\e$ and any hamiltonian $H$ which is $(2r,2s)$-regular at $I_0$, one has
that $\Upsilon^*H$
is $(r,s)$-regular. Moreover $\Upsilon$ is symplectic.
\end{lemma}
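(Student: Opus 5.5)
The proof has two parts, which I would handle separately: the regularity of $\Upsilon^*H$, and the symplecticity of $\Upsilon$.

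\emph{Regularity.} The first step is to show that $\Upsilon$ maps $\matP_r$ into $\matP_{2r}(I_0)$, with analytic components.

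For the angle component, $\Upsilon_\theta-\f=\al$ satisfies $\|\al\|_{2s;\ell^\io}\le C\e$ by \eqref{stibound}.

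For the action component, set $M(\f):=(\uno+\mathtt d\al(\f))^{-T}$. Lemma \ref{altribound} gives $\|\mathtt d\al^T\|_{3s/2;\MM(\ell^\io_q)}\le C\e$, and $\MM(\ell^\io_q)$ with the majorant norm is a Banach algebra. Hence the Neumann series $\sum_n(-\mathtt d\al^T)^n$ converges, and $M$ is $3s/2$-analytic with values in $\MM(\ell^\io_q)$, with $\|M-\uno\|\le C\e$.

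Since $M[y]$ is linear in $y$ with the $\MM$-norm controlling the majorant, it follows that $\|A+M[y]\|_{r,3s/2;\ell^\io_q}\le C\e+(1+C\e)r<2r$ as soon as $r>C\e$. This is exactly where the hypothesis $r>C\e$ enters.

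The second step is to control $\Upsilon^*H=H\circ\Upsilon$ through its vector field rather than through $H$ itself. A direct computation gives
\[
X_{\Upsilon^*H}=(d\Upsilon)^{-1}X_H\circ\Upsilon,
\]
using the symplecticity proved below (so that $d\Upsilon^{-1}=J^{-1}d\Upsilon^TJ$). The entries of $d\Upsilon^{-1}$ are built from $\uno+\mathtt d\al$, $M^{-1}=(\uno+\mathtt d\al)^T$, $\mathtt dA$, and $\del_\f(M[y])$. These are controlled in the appropriate $\MM(\TT,\TT)$ norms respectively by:
\begin{itemize}
\item the second bound of \eqref{staltribound};
\item the first bound of \eqref{staltribound};
\item the third bound of \eqref{staltribound}, for $\mathtt dA^T$;
\item the fourth bound of \eqref{staltribound}, for $\del_\f\mathtt d\al[y]$, combined with Cauchy estimates on the series for $M$.
\end{itemize}
It remains to bound $X_H\circ\Upsilon$. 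Facts 2 and 3, applied to the $(2r,2s)$-analytic components of $X_H$ at $I_0$, bound it in the $(r,s)$ norm, losing $s/2$ in the angle shift and absorbing the action shift. Multiplying by the matrix entries using fact 1 then gives $\|\Upsilon^*H\|_{r,s}\le C\|H\|_{2r,2s}$.

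\emph{Symplecticity.} I would show $\Upsilon^*\la-\lambda_0=d(\cdot)+$ closed form, where $\la=\sum_j I_jd\theta_j$ and $\lambda_0=\sum y_jd\f_j$. Write $\Upsilon^*\la=\sum_j\Upsilon_{I,j}\,d(\f_j+\al_j)$, which equals
\[
\sum_{j,k}\big(I_0+A+M[y]\big)_j(\uno+\mathtt d\al)_{jk}\,d\f_k .
\]
The $y$-part becomes $\sum_k y_kd\f_k$, because $M^T(\uno+\mathtt d\al)=\uno$. The remaining part is $u^*\la$ with $u$ the torus embedding, and its differential is $u^*\Omega=0$ by Lemma \ref{isotropo}. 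Hence
\[
\Upsilon^*\Omega=d\Upsilon^*\la=d\lambda_0+u^*\Omega=dy\wedge d\f.
\]
All the sums converge absolutely thanks to the $\MM$-bounds above, so the formal finite-dimensional computation is legitimate componentwise: each coefficient of $d\f_i\wedge d\f_j$ or $dy_i\wedge d\f_j$ is a convergent sum.

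\emph{Main obstacle.} The delicate step is the regularity bound, specifically the term $\del_\f(M[y])$ in $d\Upsilon$. This term requires a weighted control of $\del_\f\mathtt d\al$ acting from $\ell^\io$ into $\ell^\io_q$, which is precisely what the fourth estimate of \eqref{staltribound} supplies. I would first verify that the Neumann series for $M$ preserves this mixed majorant bound, differentiating term by term and using the algebra property.
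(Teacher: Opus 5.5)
Your proposal is correct and follows the paper's proof essentially step for step. It uses the same Neumann-series argument (with $r>C\e$) to show $\Upsilon(\matP_r)\subset\matP_{2r}(I_0)$, assigns the same four bounds of \eqref{staltribound} to the same operator terms, and proves symplecticity through the same identity $\Upsilon^*\la-\la=u^*\la$, which is closed by Lemma \ref{isotropo}. The only difference is cosmetic: you obtain $X_{\Upsilon^*H}=(d\Upsilon)^{-1}X_H\circ\Upsilon$ from symplecticity, while the paper computes $\del_y G$ and $\del_\f G$ directly by the chain rule, which produces the same operators without needing symplecticity first.
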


\prova
Let us start by proving that $\Upsilon$ maps $\matP_{r}(0)$ in  $\matP_{2r}(I_0)$.

By item $3$ the $\theta$-component $\Upsilon_\theta$ is a diffeomorphism on  $\TTT^\ZZZ$.
Regarding $\Upsilon_I$, we use the first inequality in \eqref{staltribound} and the second 
in \eqref{stibound} to deduce that
 by Neumann series, $\|(\uno+\mathtt d \al(\theta))^{-T} [y]\|_{\ell^\io_q} \le (1+C\e)\|y\|_{\ell^\io_q} $
and the result follows provided that $\e$ is sufficiently small so that
\[ 
\| A\|_{3s/2;\ell_q^\io}  + (1+C\e)r\le   r + C'\e(1+r) < 2r\,. 
\]

It remains to show that, if $H_{I_0}$ is $(2r,2s)$-regular, setting $G:= H\circ \Upsilon$,
then $G$ is $(r,s)$-regular.
Let us start by computing the Hamiltonian vector field of $G$
\[
\begin{aligned}
	\del_{y} G &= (\uno + d \al)^{-1} [(\del_I H) \circ \Upsilon] \,,\\
	\del_{\f} G &= (\uno + d \al)^{T}[ (\del_\theta H) \circ \Upsilon] + 
		( d A)^{T}[ (\del_I H )\circ \Upsilon] \\
		&\qquad+\Big( \del_{\f} ( d \al)(\uno + d \al)^{-1} [(\del_I H) \circ \Upsilon]\Big)\cdot   
		\Big((\uno + d \al)^{-T} [y]\Big).
\end{aligned}
\]
Then we need to prove that $\del_y G, \del_{\f} G$ are $(r,s)$-analytic,
with values in $\ell^\io$ and $\ell^\io_q$ respectively. 

The second in \eqref{staltribound} implies that
 the operator $(\uno+d\al)^{-1}$ maps the set of $(r,s)$-analytic functions with values in $\ell^\io$
into itself. Now since $\del_I H$ is $(2r,2s)$-analytic map with values in $\ell^\io$,
then by items 2 and 3 $\del_I H \circ \Upsilon$ is $(r,s)$-analytic with values in $\ell^\io$, and hence
$\del_y G$ is $(r,s)$-analyitc.

To analyse $\del_\f G$ we start noticing that for the first summand one can reason as above switching
the roles of $\ell^\io$ and $\ell^{\io}_q$. For the second summand we use the
third inequality in \eqref{staltribound}
and then reason as above. Regarding the third summand, again we reason as above,
using the last bound
in \eqref{staltribound}.

To conclude
we need to prove that $\Upsilon^*\Omega=\Omega$. To this purpose it is sufficient to show that
$\Upsilon^* \la - \la$ is closed, since in such a case one has
\[
\Upsilon^*\Omega = \Upsilon^*d\la =d \Upsilon^*\la = d\la =\Omega.
\]
By direct inspection 
\[
\begin{aligned}
\Upsilon^* \la - \la &= (I_0 + A(\f) + (\uno+\mathtt d \al(\f))^{-T} y)d( \f + \al(\f)) - yd\f \\
&=  (I_0 + A(\f) )d( \f + \al(\f)) = u^*\la
\end{aligned}
\]
which is closed since $d(u^*\la) = u^*\Omega=0$ by Lemma \ref{isotropo}.
\EP

\noindent
{\it Proof of Theorem \ref{bibi}.}
We apply Lemma \ref{sympatica} to the Hamiltonian \eqref{ham}, which is $(2r,2s)$-regular
for any $r$ by hypothesis. By construction the invariant torus $\matT_\e(I_0)$ in the variables $(\f,y)$
corresponds to $y=0$ and it is a Kronecker torus with frequency $\om(I_0)$, so that
\[
R(\f,y) := H\circ \Upsilon(\f,y) - \om(I_0)\cdot y - \frac{1}{2}  y \cdot Q[y]
\]
 is quadratic w.r.t. $y$, and the bound follows
directly. 
In fact, substituting  we obtain that
\[
R(\f,y)= \frac12 (\uno +d\al)^{-T} [y] \cdot Q(\uno +d\al)^{-T} [y]  - \frac{1}{2}  y \cdot Q[y] \,.
\]
Finally, by direct inspection the torus $\TTT^\ZZZ\times\{0\}$ is lagrangian, namely its tangent
at each point coincides with its symplectic orthogonal: since $\Upsilon$ is symplectic this ensures
that also $\TT_\e(I_0)$ is lagrangian.
\EP 

\begin{rmk}
	It is worth mentioning that not only $R$ is $(r,s)$-regular for any $r$, but it also satisfies the following 
	\[
	\| \del_y R\|_{r,s,\ell^\io_q} \le \e C\,,\qquad  	\|\del_y^2R\|_{r,s,\mathcal M(\ell^\io)} \le \e C
	\]
	Indeed the first bound comes from  the first inequality in \eqref{staltribound}, and the fact that,
	$\al -\langle \al\rangle =(\omega\cdot \del_\f)^{-1} Q A$ 
	has in fact values in $\ell^\io_q$ by Lemma 4.6 in \cite{CGP}. 
Regarding the second bound we first note that
\[
\del^2_y R = (\uno+d\al)^{-1}Q(\uno+d\al)^{-T} - Q.
\]
Again the second inequality in \eqref{staltribound} ensures that $(\uno+d\al)^{-1} \in \MM(\ell^\io)$,
while Lemma 4.6 in \cite{CGP} ensures that $\al - \av{\al} \in \ell^\io_q$ so that
\[
\sup_{j\in\ZZZ}\sum_{i\in\ZZZ} \|\del_{\f_j}\al_i\|_{s,\RRR} < \e C,
\]
and hence $d\al^T\in\MM(\ell^\io)$.
\end{rmk}

\section{Birkhoff Normal Form}

By construction in the variables $(\f,y)$ the Hamiltonian has the form
\begin{equation}\label{newham}
H_0(y,\f)= D[y] + R(y,\f)\,,\quad R(y,\f):= \sum_{\substack{k\in\NNN^\ZZZ_f\\ \|k\|_{\ell^1}\ge 2}}\sum_{\nu\in\ZZZ^\ZZZ_f} R_{k,\nu} y^ke^{\ii \f\cdot \nu}
\end{equation}
where $D=D[y]$ is defined in \eqref{di} with $\om=\om(I_0)$ diophantine,  and $\| R\|_{r,s}\le Cr\e$. In particular, since
$R$ is quadratic w.r.t.~$y$, then
\begin{equation}\label{numeropuro}
\frac{d}{d\de} \frac{\| R\|_{\de,s}}{\de} = 0,\qquad  \frac{\| R\|_{\de,s}}{\de}\le C\e.
\end{equation}

We now  perform $N$ averaging steps namely construct a change of variables, defined in some small neighborhood of the invariant torus $\TTT^\ZZZ\times \{0\}$, which conjugates $H_0$ to the normal form
\[
H_N(y,\f)= D[y] + Z_N(y)+R_{N}(y,\f)\,,\qquad
R_N(y,\f):=\!\!\!\! \sum_{\substack{k\in\NNN^\ZZZ_f\\ \|k\|_{\ell^1}\ge N+ 2}}\sum_{\nu\in\ZZZ^\ZZZ_f}
R^{(N)}_{k,\nu} y^ke^{\ii \f\cdot \nu}
\]
where $R_N$ is regular while $Z_N$ is integrable, regular,  and of degree at least two in $y$.

We have the following standard result.

\begin{lemma}\label{homo}
	Assume that $\omega$ is Diophantine and fix $K\in\NNN$. There exists $r(K)$
	 such that, if $0<r<{r(K)}$ 
	 the following holds for any $s>0$. If $f$ is $(r,s)$-regular and has zero average then the equation
	\begin{equation}\label{homo}
	\{D, S\} =\Pi_K f\,
	\end{equation}
	admits a unique solution $S = \Pi_K S$ with zero average satisfying the bounds
	\begin{equation}\label{stimomo}
	\|S\|_{r,s} \le C_0 e^{\mathtt C K^{2/3}\log(K)}\|f\|_{r,s}
	\end{equation}
	for some constants $C_0$, $\mathtt C$.
\end{lemma}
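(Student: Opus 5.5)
We will solve the equation coefficientwise in the Taylor–Fourier basis and treat the quadratic term $\frac12 y\cdot Q[y]$ of $D$ as a perturbation of the linear operator $\om\cdot\del_\f$.

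\textbf{Step 1: decomposition.} Write
\[
\{D,S\} = \om\cdot\del_\f S + (Q[y])\cdot\del_\f S = L_\om S + \mathcal{B}S,
\]
where $L_\om$ acts diagonally on monomials $y^ke^{\ii\f\cdot\nu}$, multiplying them by $\ii\,\om\cdot\nu$. The operator $\mathcal{B}$ raises the degree in $y$ by one. Both operators preserve the support $|\nu|_\star\le K$ and the zero-average condition. We look for $S=\Pi_K S$ and set $S=(L_\om+\mathcal{B})^{-1}\Pi_K f$, defined by the Neumann series
\[
S=\sum_{n\ge0}(-L_\om^{-1}\mathcal{B})^nL_\om^{-1}\Pi_K f.
\]
Uniqueness follows from the same invertibility.

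\textbf{Step 2: the small-divisor bound.} This is the main obstacle. We need
\[
\sup_{0<|\nu|_\star\le K}\ \frac{1}{|\om\cdot\nu|}\ \le\ \g^{-1}\sup_{|\nu|_\star\le K}\prod_{j}\big(1+\jap{j}^2|\nu_j|^2\big)^{q+1}\ \le\ C_0e^{\mathtt C K^{2/3}\log K}.
\]
Here we invoke the bounds on small divisors and ultraviolet cutoffs of \cite{MP,CMP}; the idea is as follows. Since $\sum_j\sqrt{\jap j}|\nu_j|\le K$, at most $\sim K^{2/3}$ sites $j$ can have $\nu_j\ne0$. Each factor is at most polynomial in $K$. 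Hence
\[
\log\prod_j\big(1+\jap{j}^2|\nu_j|^2\big)\lesssim K^{2/3}\log K.
\]
To get the exponent $2/3$, split the sites into $|j|\le K^{2/3}$ and $|j|>K^{2/3}$, using $\sum_j\sqrt{\jap j}\le K$ on the support. Because $L_\om^{-1}$ is diagonal and the norms in \eqref{converge!} are weighted $\ell^1$ sums of coefficients, this yields
\[
\|L_\om^{-1}g\|_{r,s}\le C_0e^{\mathtt CK^{2/3}\log K}\|g\|_{r,s}
\]
for $g=\Pi_Kg$ with zero average. The bound must be checked on both components of the vector field: $\del_y$ and $\del_\f$ commute with $L_\om^{-1}$, so each component inherits it.

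\textbf{Step 3: the perturbation.} Bound $\mathcal{B}$ on $\Pi_K$-truncated functions. The $I$-component of $X_{\mathcal{B}S}$ involves $\del_\f$ of $(Q[y])\cdot\del_\f S$. Using $Q\in\MM(\ell^\io_q)\cap\MM(\ell^\io)$, $|\nu_j|\le K/\sqrt{\jap j}$, and the fact that $y$ carries a factor $\lesssim r$ in the norm, we expect
\[
\|\mathcal{B}g\|_{r,s}\le C K\, r\,\|g\|_{r,s}.
\]
The extra derivatives in $\f$ cost at most powers of $K$, and no loss in $s$ is needed because of the cutoff. Therefore
\[
\|L_\om^{-1}\mathcal{B}\|\le C r K e^{\mathtt CK^{2/3}\log K},
\]
and choosing $r(K)$ with $r(K)\,C K e^{\mathtt CK^{2/3}\log K}\le1/2$ makes the Neumann series converge. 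This gives \eqref{stimomo}, with constants absorbed by doubling $C_0$.

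The delicate point remains the combinatorial estimate of Step 2 in the weighted norm $|\cdot|_\star$. We would rely on \cite{MP,CMP} rather than reprove it.
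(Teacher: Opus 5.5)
Your proposal is correct and is essentially the paper's argument. The paper also solves mode by mode, $S_\nu(y)=(\ii(\om+Q[y])\cdot\nu)^{-1}f_\nu(y)$, using the small-divisor bound for $|\nu|_\star\le K$ and choosing $r(K)$ so that $Q[y]\cdot\nu$ is dominated by $\om\cdot\nu$. Your Neumann series $\sum_n(-L_\om^{-1}\mathcal B)^nL_\om^{-1}$ is exactly the geometric expansion of this quotient, which is how the paper's pointwise lower bound has to be justified in the majorant norm anyway. Your counting argument ($O(K^{2/3})$ active sites, each factor at most $1+K^4$) proves directly the estimate the paper imports from the literature.

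One minor slip: $\del_y(\mathcal B g)$ also contains the term $Q[\del_\f g]$, which costs $\|Q\|\,r/s$ rather than $rK$, so your constant depends on $s$. The paper's term $f_\nu Q[\nu]$ has the same feature, and this is harmless since $s$ stays bounded below in the application.
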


\prova
We start by noting that \eqref{homo} can be rewritten as
\[
(\om + Q[y])\cdot \del_\f S(y,\f) = \sum_{\substack{\nu\in\ZZZ^\ZZZ_f \\ 0< |\nu|_\star \le K}} f_\nu(y)e^{\ii\nu\cdot\f}
\]
so, passing to the Fourier side we obtain formally
\[
S_\nu(y) = (\ii(\om + Q[y])\cdot\nu)^{-1} f_\nu(y).
\]

The bound (iii) in \cite{CMP}-Lemma A.6 ensures
\[
|\om\cdot \nu| \ge C_1e^{C_2 K^{2/3} \log(K)}
\]
for some constants $C_1,C_2$. On the other hand if we set
\begin{equation}\label{erredikappa}
r(K) := \frac{1}{4C_1\|Q\|_{\MM(\ell^\io_q)} }e^{-2C_2K^{2/3} \log(K)},
\end{equation}
we can bound
\[
|(\om + Q[y])\cdot\nu | \ge \frac{C_1}{2} e^{C_2 K^{2/3}\log(K)}.
\]

Then we have $\del_\f S_\nu =\ii\nu S_\nu$ so is easily bounded,
while
\[
(\del_yS)_\nu(y) = (\ii(\om + Q[y])\cdot\nu)^{-1}  \del_y f_\nu(y) -
(\ii(\om + Q[y])\cdot\nu)^{-2} f_\nu(y) Q[\nu]
\]
so the bound \eqref{stimomo} follows with $\mathtt C = 3 C_2$.
\EP

We shall iterate the following ``Birkhoff step'':
consider $\calH (y,\f)= D[y] + \mathcal Z(y)+\mathcal R(y,\f)$, fix $K>1$ and assume that

\begin{itemize}
	\item  $\mathcal Z,\mathcal R$ are $(\delta,s)$-regular, with $\de < r(K)$ defined in \eqref{erredikappa}
	\item $\mathcal Z$ has minimal degree $2$ in $y$, while  $\mathcal R$ has minimal degree
	$d\ge2$ in $y$,
	\item there exist $0<\rho<\delta/2$ and  $0<\s<s/2$ such that
	\begin{equation}\label{bistep}
	\|\mathcal R\|_{\delta,s} e^{-\mathtt C K^{2/3}\log(K)} < c\min(\frac{\rho}{\delta}, \frac{\s}{s}),
	\end{equation}
	for some small constant $c>0$.
\end{itemize}

Let $\mathcal S$ be the unique solution of
\[
\{D, \mathcal S\} = \Pi_K\mathcal R-\langle \mathcal R\rangle\,.
\]
By Lemma \ref{homo} $\mathcal S$ is $(\delta,s)$-regular  and by Lemma  \ref{hamflow}
the time-one flow of $\mathcal S$ generates a symplectic change of variables which
maps $(\delta,s)$-regular functions in $(\delta-\rho,s-\s)$ regular ones.

We have
\[
\begin{aligned}
	e^{\{\mathcal S,\cdot\}} (D & + \mathcal Z+\mathcal R ) \\
	&=D + \mathcal Z + \Pi_K^\perp \mathcal R + \langle \mathcal R \rangle + 
	\sum_{k=1}^\io \frac{1}{k!}({\rm ad}\mathcal S)^k(\mathcal Z+\mathcal R) -
	\sum_{k=2}^\io \frac{1}{k!}({\rm ad}\mathcal S)^{k-1} (\Pi_K\mathcal R- \langle \mathcal R \rangle  )\\
	& = D  + \mathcal Z_++\mathcal R_+ 
\end{aligned}
\]
where $\mathcal Z_+:= \mathcal Z+\langle \mathcal R \rangle $  is integrable,
regular and of minimal degree $2$, while 
$$
\mathcal R_+:= \Pi_K^\perp \mathcal R +  \sum_{k=1}^\io \frac{1}{k!}({\rm ad}\mathcal S)^k(\mathcal Z+\mathcal R) -
\sum_{k=2}^\io \frac{1}{k!}({\rm ad}\mathcal S)^{k-1} (\Pi_K\mathcal R- \langle \mathcal R \rangle  )
$$ 
is regular and has minimal degree $d+1$. 
By Lemma \ref{genoveffa} and Lemma \ref{hamflow} we have the following bounds
\[
\|\mathcal Z_+\|_{\de-\rho,s-\s} \le
(\|\mathcal Z\|_{\delta,s} +
\|\mathcal R\|_{\delta,s}  ),
\]
\[
\|\mathcal R_+\|_{\de-\rho,s-\s}  \le e^{-\s K}\|\RR\|_{\de,s}+
C \max(\frac{\de}{\rho},\frac{s}{\s})e^{\mathtt C K^{2/3}\log(K)}
(\|\mathcal Z\|_{\delta,s}\|\mathcal R\|_{\delta,s}  + \|\mathcal R\|_{\delta,s}^2 ).
\]


\begin{lemma}[Iterative Lemma]\label{itero}
	Let $N,K\ge 2$ be large enough.  Fix  $s_0>0$ and $\delta_0 \in(0, r(K))$, and assume
	\begin{equation}\label{piccino}
	 e^{-s_0 K/(2N)} \ll
	\| R\|_{\delta_0,s_0}   \ll \frac{1}{N} e^{-3\mathtt C K^{2/3}\log(K)}
	\end{equation}
	Set $\rho= \delta_0/(2N)$, $\s= s_0/(2N)$ and, for $k=1,\dots,N$,
	set  $\delta_k= \delta_0- k \rho$, $s_k= s_0-k\s$.
	For any $k=1,\dots,N$  there exist a $(\delta_k,s_k)$-regular Hamiltonian
	\[
	H_k:= D + Z_k + R_k
	\]
	with $Z_k$ integrable and of minimal
	degree $2$ in $y$, and $R_k$ of minimal degree $k+2$ in $y$ such that the following holds.
	
\begin{enumerate}

\item Setting $Z_0=0$, $R_0=R$ and $S_k$ the unique zero-average solution of
\[
\{D,S_k\}= \Pi_{K} R_{k-1}-\langle R_{k-1}\rangle,
\]
each $S_k$ generates a time-one flow map $\matP_{\delta_k}\to \matP_{\delta_{k-1}}$
and one has $H_k =e^{\{S_k,\cdot\}} H_{k-1}$.

\item One has
\[
\|Z_k\|_{\delta_k,s_k} \le  2\sum_{h=0}^k 2^{-h} \|R_0\|_{\delta_0,s_0}
	\,,\qquad 	\|R_k\|_{\delta_k,s_k} \le C^k \| R_0\|_{\delta_0,s_0}^{k+1} ( {e^{3\mathtt C K^{2/3}\log(K)} N})^k.
\]

\end{enumerate}	
\end{lemma}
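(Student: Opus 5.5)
The plan is an induction on $k$: apply the ``Birkhoff step'' described just before the lemma with $\calH=H_{k-1}$, $\de=\de_{k-1}$, $s=s_{k-1}$, $\rho=\de_0/(2N)$, $\s=s_0/(2N)$. For $k=0$ the claims hold trivially, with $Z_0=0$ and $R_0=R$ of minimal degree $2$. Assume now $H_{k-1}=D+Z_{k-1}+R_{k-1}$ satisfies item 2 at step $k-1$. Since $k\le N$, one has $\de_{k-1}\ge\de_0/2\ge 2\rho$ and $s_{k-1}\ge s_0/2\ge 2\s$, so $\rho<\de_{k-1}/2$ and $\s<s_{k-1}/2$. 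Also $\de_{k-1}\le\de_0<r(K)$.

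The first task is to verify the smallness hypothesis \eqref{bistep}. One has $\min(\rho/\de_{k-1},\s/s_{k-1})\ge 1/(2N)$. The induction bound gives
\[
\|R_{k-1}\|_{\de_{k-1},s_{k-1}}\le \|R_0\|\,\bigl(C\|R_0\|Ne^{3\mathtt CK^{2/3}\log K}\bigr)^{k-1}.
\]
By the right inequality of \eqref{piccino}, the bracket is $\le 1/2$. Hence $\|R_{k-1}\|\le\|R_0\|\ll N^{-1}e^{-3\mathtt C K^{2/3}\log K}$, which gives \eqref{bistep}. Then Lemma \ref{homo} produces $S_k$ with $\|S_k\|\le C_0e^{\mathtt CK^{2/3}\log K}\|R_{k-1}\|$. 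This is small compared with $\de/(2Ce)$ in \eqref{stima generatrice}, where $\de=\min(\rho/\de_{k-1},\s/s_{k-1})\sim 1/N$. So Lemma \ref{hamflow} gives a well-defined symplectic time-one map $\matP_{\de_k}\to\matP_{\de_{k-1}}$ and $H_k=e^{\{S_k,\cdot\}}H_{k-1}$. This is item 1, and the structural statements (integrability and degrees of $Z_k$, $R_k$) are those of the Birkhoff step.

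For item 2, the $Z$ bound is immediate: $\|Z_k\|\le\|Z_{k-1}\|+\|R_{k-1}\|\le\|Z_{k-1}\|+2^{-(k-1)}\|R_0\|$, using the geometric decay of $\|R_{k-1}\|$ just established. The $R$ bound follows from the Birkhoff step estimate, with $\max(\de/\rho,s/\s)\le 2N$:
\[
\|R_k\|\le e^{-\s K}\|R_{k-1}\|+2CNe^{\mathtt CK^{2/3}\log K}\bigl(\|Z_{k-1}\|+\|R_{k-1}\|\bigr)\|R_{k-1}\|.
\]
In the second term, $\|Z_{k-1}\|+\|R_{k-1}\|\le 5\|R_0\|$, so this term is $\le C'\|R_0\|Ne^{\mathtt CK^{2/3}\log K}\|R_{k-1}\|$, which is one more factor of the required type.

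The main obstacle is the tail term $e^{-\s K}\|R_{k-1}\|$, which does not carry the extra factor $\|R_0\|$ in an obvious way. Here the left inequality of \eqref{piccino} is used: $e^{-\s K}=e^{-s_0K/(2N)}\ll\|R_0\|$, so this term is also bounded by $\|R_0\|\,\|R_{k-1}\|$. There is a degree subtlety, since $\Pi_K^\perp R_{k-1}$ still has minimal degree $k+1$ rather than $k+2$. I would handle it by keeping the ultraviolet tail in the remainder, as the paper's normal form $R_N$ implicitly allows; only the norm bound is needed for the iteration. Combining the two terms gives $\|R_k\|\le C\|R_0\|Ne^{3\mathtt CK^{2/3}\log K}\|R_{k-1}\|$, using $e^{\mathtt C\cdots}\le e^{3\mathtt C\cdots}$, which closes the induction.
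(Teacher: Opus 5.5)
Your proposal is correct and is essentially the paper's proof. Each induction step is one Birkhoff step with $\rho=\de_0/(2N)$, $\s=s_0/(2N)$. The right half of \eqref{piccino} gives $\|R_{k-1}\|\le 2^{-(k-1)}\|R_0\|$, hence \eqref{bistep}, the smallness condition on the generator in Lemma \ref{hamflow}, and the $Z_k$ bound. The left half absorbs the tail $e^{-s_0K/(2N)}\|R_{k-1}\|$ into one extra factor $\|R_0\|$.

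Your degree remark is a genuine correction, not something the paper already allows for. Its Birkhoff step claims that $\mathcal R_+$ has minimal degree $d+1$, but this overlooks $\Pi_K^\perp\mathcal R$. What actually holds is that $R_k$ is a term of minimal degree $k+2$ plus a lower-degree part lying in the range of $\Pi_K^\perp$. That part is invisible to $\Pi_K R_k-\langle R_k\rangle$, so $S_{k+1}$ keeps minimal degree $k+2$ and the structure propagates. As you say, only the norm bound is used afterwards.
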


\prova
We prove the result by induction on $k$.
For $k=0$ the only thing to verify are the bounds in item 2, which hold trivially.

Let us assume inductively the result to hold for $k=0,\ldots,n-1$ with $n\ge 1$. 
First note that
\[
\norm{R_{n-1}}_{\delta_{n-1}, s_{n-1}} {e^{3\mathtt C K^{2/3}\log(K)} }N
\le  C^{n-1} \| R_0\|_{\delta_0,s_0}^{n}  ({e^{3\mathtt C K^{2/3}\log(K)} N})^n \le \frac{c}{2},
\]
provided $C$ is large enough,
so we can apply one ``Birkhoff step'' to $H_{n-1}$ and we obtain 
	\[
	\begin{aligned}
		\|Z_{n}\|_{\delta_n,s_n} & \le \|Z_{n-1}\|_{\delta_{n-1},s_{n-1}} + \|R_{n-1}\|_{\delta_{n-1},s_{n-1}} \\
		& \le  2\sum_{h=0}^{n-1}2^{-h}\|R_0\|_{\delta_0,s_0}  +  C^{n-1}  \| R_0\|_{\delta_0,s_0}^{n}
		({e^{3\mathtt C K^{2/3}\log(K)} N})^{n-1}
		\le 2\sum_{h=0}^{n}2^{-h}\norm{R_0}_{\delta_0,s_0}
	\end{aligned} 
	\]	
	where in the last inequality we used that
	\[
	C^{n-1}  \| R_0\|_{\delta_0,s_0}^{n}
		({e^{3\mathtt C K^{2/3}\log(K)} N})^{n-1}<2^{-n+1}\|R_0\|_{\de_0,s_0},
	\]
	by the smallness condition \eqref{piccino}.
	Similarly
	\begin{equation}\label{unaltronome}
	\begin{aligned}
		\|R_{n}\|_{\delta_n,s_n} &\le c{e^{3\mathtt C K^{2/3}\log(K)} N} (\|Z_{n-1}\|_{\delta_{n-1},s_{n-1}}\| R_{n-1}\|_{\delta_{n-1},s_{n-1}}  + \|R_{n-1}\|_{\delta_{n-1},s_{n-1}}^2 ) \\
		&\qquad +e^{- s_0K/(2N)} \|R_{n-1}\|_{\delta_{n-1},s_{n-1}}
		\\
		&\le 4c{e^{3\mathtt C K^{2/3}\log(K)} N}\|R_0\|_{\de_0,s_0}C^{n-1}\|R_0\|^n
		({e^{3\mathtt C K^{2/3}\log(K)} N} )^{n-1}\\
		&\qquad + e^{- s_0K/(2N)}C^{n-1}\|R_0\|^n
		({e^{3\mathtt C K^{2/3}\log(K)} N} )^{n-1}.
	\end{aligned}
	\end{equation}
	
Now we note that
\[
4c{e^{3\mathtt C K^{2/3}\log(K)} N}\|R_0\|_{\de_0,s_0}C^{n-1}\|R_0\|^n
		({e^{3\mathtt C K^{2/3}\log(K)} N} )^{n-1} \le \frac{1}{2}C^{n}\|R_0\|^{n+1}
		({e^{3\mathtt C K^{2/3}\log(K)} N} )^{n}
\]
provided that $C$ is large enough. Moreover
\[
 e^{- s_0K/(2N)}C^{n-1}\|R_0\|^n
		({e^{3\mathtt C K^{2/3}\log(K))} N} )^{n-1} \le \frac{1}{2}C^{n}\|R_0\|^{n+1}
		({e^{3\mathtt C K^{2/3}\log(K)} N} )^{n}
\]
follows by \eqref{piccino}, so the assertion follows.
\EP

\begin{rmk}\label{incasina}
If $R_0\equiv0$ the condition \eqref{piccino} means that one should take $K=+\io$. However in such a case
Theorem \ref{birkof} is trivial with $\Upsilon_N=\Upsilon$, and Theorem \ref{stimadeitempivera} holds
for infinite time. 
\end{rmk}

In view of Remark \ref{incasina} above, we assume $R_0\ne0$ and set
\[
\|R_0\|_{\de_0,s_0} = e^{-a}.
\]

 Setting $\Psi_N:= \Phi_{S_1}^1\circ \dots \circ\Phi_{S_N}^1:\matP_{\delta_0/2}\to\matP_{\de_0}$
 one has that $\Psi_N$
conjugates $H_0$ with the normal form \eqref{norm}, provided that 
\eqref{piccino} holds and $\de_0 < r(K)$. We now prove that for $\e,\de_0$
sufficiently small, this can be achieved by choosing $N,K$ suitably. Recall \eqref{numeropuro}.

\begin{lemma}\label{finale?}
For $\e,\de_0$ small enough,
assuming that
\begin{equation}\label{vedosubito}
\de_0 < (\frac{\|R_0\|_{\de_0,s_0}}{\de_0})^{1/2},
\end{equation}
 fix
\begin{subequations}
\begin{align}
&K=\frac{4}{s_0}a N,\label{uno} \\
& N<a^{1/3}\label{due} \\
&N<\frac{1}{2} \log^{1/3}(\frac{1}{\de_0}) . \label{tre}
\end{align}
\end{subequations}
Then $\de_0<r(K)$ and  \eqref{piccino} holds.
\end{lemma}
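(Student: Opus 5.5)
The proof reduces to three elementary inequalities, all in logarithmic scale. Write $L:=\log(1/\de_0)$ and recall $\|R_0\|_{\de_0,s_0}=e^{-a}$. By \eqref{numeropuro} we have $e^{-a}\le C\e\de_0$, so $a\ge \log(1/\de_0)+\log(1/(C\e))$, which is large when $\e,\de_0$ are small. Hypothesis \eqref{vedosubito} gives $\de_0^2<e^{-a}/\de_0$, i.e.\ $e^{-a}>\de_0^3$, hence
\[
a<3L.
\]
Thus $a$ and $L$ are comparable up to constants: $L\lesssim a< 3L$. With $K=\frac{4}{s_0}aN$ as in \eqref{uno}, all quantities are powers of $a$ (or $L$). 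The exponent $K^{2/3}\log K$ will be controlled using $N<a^{1/3}$ from \eqref{due}.

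\textbf{Step 1: the lower bound in \eqref{piccino}.} Substituting \eqref{uno} gives $e^{-s_0K/(2N)}=e^{-2a}$. This is $\ll e^{-a}=\|R_0\|_{\de_0,s_0}$ as soon as $a$ is large, which holds for $\e$ or $\de_0$ small.

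\textbf{Step 2: the upper bound in \eqref{piccino}.} We need
\[
a>\log N+3\mathtt C K^{2/3}\log K+O(1).
\]
From \eqref{due}, $K\le\frac{4}{s_0}a^{4/3}$, so $K^{2/3}\le C a^{8/9}$ and $\log K\le C\log a$. The right-hand side is therefore $O(a^{8/9}\log a)=o(a)$, and the inequality holds for $a$ large. This is where the balance between $N$ and $a$ enters. If the exponents were tighter one would have to check them carefully, but here $8/9<1$ leaves room.

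\textbf{Step 3: $\de_0<r(K)$.} By \eqref{erredikappa} this amounts to
\[
L>2C_2K^{2/3}\log K+\log\bigl(4C_1\|Q\|_{\MM(\ell^\io_q)}\bigr).
\]
Using \eqref{tre} we get $N<\frac12 L^{1/3}$, and with $a<3L$ this gives $K\le\frac{4}{s_0}\cdot 3L\cdot\frac12L^{1/3}=CL^{4/3}$. Then $K^{2/3}\log K\le CL^{8/9}\log L=o(L)$, and the inequality holds once $\de_0$ is small. I expect this step to be the main point to get right: it needs the comparison $a<3L$ coming from \eqref{vedosubito}, because $K$ is defined through $a$ while the condition involves $\de_0$. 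With that comparison the estimate is routine, and all constants are absorbed by taking $\e,\de_0$ small enough.
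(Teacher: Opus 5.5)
Your proof is correct and follows essentially the same route as the paper. The lower bound in \eqref{piccino} is immediate from \eqref{uno}, and the upper bound follows from $K< \frac{4}{s_0}a^{4/3}$, which comes from \eqref{due}; the paper encodes this as $N^4<s_0K/4$ and \eqref{facaldo}. Finally, $\de_0<r(K)$ comes from the comparison $a<3\log(1/\de_0)$, given by \eqref{vedosubito}, together with \eqref{tre}.

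The only difference is in how the exponents are handled. You bound $K^{2/3}\log K$ by $O(a^{8/9}\log a)$ and by $O(L^{8/9}\log L)$, which keeps a margin and makes the argument insensitive to numerical constants. The paper instead reduces to $\log(1/\de_0)\ge (aN)^{3/4}$, which uses \eqref{tre} with essentially no slack.
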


\prova
First of all for \eqref{piccino} to make sense one needs
	\begin{equation}\label{piccino1}
	 {s_0 K/(3N)} \ge {4\mathtt C K^{2/3}\log(K)} + \log(N)
	\end{equation}
	which in turn is implied by
	\begin{equation}\label{facaldo}
	\frac{K^{1/3}}{\log(K)} \ge \frac{13}{s_0}\mathtt C N.
	\end{equation}
By substituting \eqref{uno} in \eqref{due} we obtain $N^4 < s_0 K /4$ which ensures \eqref{facaldo}
provided $K$ is sufficiently large. Thus we need to prove
	\begin{equation}\label{ripiccino}
	 {s_0 K/(3N)} \ge
	a \ge 4\mathtt C K^{2/3}\log(K) + \log(N).
	\end{equation}
The first bound follows directly from the choice \eqref{uno}. The second follows from \eqref{facaldo}.
Thus we are left to verify that 
\[
\log(\frac{1}{\de_0}) > 3C_2(\frac{4}{s_0}a N)^{2/3} \log(\frac{4}{s_0}a N)
\]
which is implied by
\begin{equation}\label{quellali}
\log(\frac{1}{\de_0}) \ge (aN)^{3/4} = N^{3/4}(\log(\frac{1}{\de_0}) + \log(\frac{\de_0}{\|R_0\|_{\de_0,s_0}}))^{3/4}.
\end{equation}
Using \eqref{vedosubito},
then \eqref{quellali} becomes
\[
\log(\frac{1}{\de_0}) \ge 2 N^{3/4}(\log(\frac{1}{\de_0}))^{3/4},
\]
i.e. \eqref{tre}.
\EP

\medskip

\noindent
{\it Proof of Theorem \ref{birkof}.} First note that $\Upsilon$ conjugates the Hamiltonian $H$ in
\eqref{ham} to the Hamiltonian $H_0$ in \eqref{newham}. The smallness condition \eqref{due1}
ensures that the hypotheses of Lemma \ref{finale?} are met and hence one can apply Lemma
\ref{itero}. Then Theorem \ref{birkof} follows with $\Upsilon_N = \Upsilon \circ \Psi_N$.
\EP

Now, for any $\e,\de>0$ sufficiently small, optimizing over $N$ (see for instance \cite{giorgilli}) we conjugate
$H_0$ in \eqref{newham} to the hamiltonian
\[
H_{fin}(\f,y) = h_{fin}(y) + R_{fin}(\f,y),\qquad \|R_{fin}\|_{\de/2,s/2} < e^{-c ({\log(1/\epsilon)})^{4/3}}
\qquad \epsilon := \| R\|_{\de,s},
\]
for some constant $c>0$.

Finally, setting $\om_{fin}(y) = \del_{y} h_{fin} (y)$, we have the following result.

\begin{lemma}\label{stimadeitempi}
For any $(\f_0,y_0) \in \matP_{\de/4}$ the hamiltonian flow $\Phi_{H_{fin}}^t$ is well defined and satisfies
\[
\begin{aligned}
&\| y(t) - y_0\|_{\ell^\io_q} < \de^2 \\
&\|\f(t) - \f_0 - \om_{fin}(y_0)t\|_{\ell^\io} < \de s
\end{aligned}
\]
for all $t< T(\de)$ with
\[
T(\de) := C e^{c ({\log(1/\epsilon)})^{4/3}}
\]
for some constan $C>0$.
\end{lemma}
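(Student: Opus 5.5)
We use a standard bootstrap (continuity) argument based on the Hamilton equations of $H_{fin}$ in the variables $(\f,y)$:
\[
\dot y = -\del_\f R_{fin}(\f,y),\qquad \dot\f = \om_{fin}(y) + \del_y R_{fin}(\f,y).
\]
Fix $(\f_0,y_0)\in\matP_{\de/4}$ and let $T^*$ be the supremum of times $t$ such that the solution exists on $[0,t]$ and stays in $\matP_{3\de/8}$. Since $H_{fin}$ is $(\de/2,s/2)$-regular, the vector field $X_{R_{fin}}$ is $(\de/2,s/2)$-analytic. By Lemma \ref{flussi}, or simply by the Cauchy–Lipschitz theorem in the Banach space $\TT$ applied to an analytic vector field, the solution exists locally and can be continued as long as it remains in a smaller domain such as $\matP_{3\de/8}$. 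For $t<T^*$, Definition \ref{campovett} gives the componentwise bounds
\[
\|\del_\f R_{fin}\|_{\ell^\io_q}\le \tfrac{\de}{2}\|R_{fin}\|_{\de/2,s/2},\qquad \|\del_y R_{fin}\|_{\ell^\io}\le \tfrac{s}{2}\|R_{fin}\|_{\de/2,s/2}.
\]
Here we use that the $(r,s)$-norm dominates the sup over the real domain (Remark \ref{majo}).

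Integrating in time, for $t<T^*$ we get $\|y(t)-y_0\|_{\ell^\io_q}\le \frac{\de}{2}\,t\,e^{-c(\log(1/\epsilon))^{4/3}}$. For $t< T(\de)=C e^{c(\log 1/\epsilon)^{4/3}}$ with $C$ small, this is $<\de^2$, and in particular $<\de/8$. So $y(t)$ stays in $B_{3\de/8}$, which closes the bootstrap and yields $T^*\ge T(\de)$. Note that the $\f$-component lives on $\TTT^\ZZZ$, so no confinement in $\f$ is needed.

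For the angles, we write
\[
\f(t)-\f_0-\om_{fin}(y_0)t=\int_0^t\big(\om_{fin}(y(\tau))-\om_{fin}(y_0)\big)d\tau+\int_0^t\del_y R_{fin}\,d\tau.
\]
The second integral is bounded by $\frac s2\, t\,\|R_{fin}\|<\de s/2$ on the same time scale. For the first, we use that $h_{fin}=D+Z_N$, so $\om_{fin}(y)=\om+Q[y]+\del_y Z_N(y)$. This is Lipschitz from $\ell^\io_q$ to $\ell^\io$ with constant $\|Q\|_{\MM(\ell^\io_q,\ell^\io)}+O(\|Z_N\|/\de)$, by the Cauchy estimate of Lemma \ref{cau}. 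Together with $\|y(\tau)-y_0\|<\de^2$, the first integral contributes at most $Ct\de^2$.

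This is the main obstacle. The term $Ct\de^2$ is not small over exponentially long times. To obtain $<\de s$ one must either compare with the frequency along the actual orbit, or (as is customary) absorb this drift by interpreting the statement with the $y$-drift made of size $\de^2 e^{-c(\log 1/\epsilon)^{4/3}}t$ rather than $\de^2$. In the latter reading, the same estimate gives $\|\om_{fin}(y(\tau))-\om_{fin}(y_0)\|_{\ell^\io}\le C\de\, e^{-c(\log 1/\epsilon)^{4/3}}\tau$, and hence a total contribution $\le C\de\, t^2 e^{-c(\log1/\epsilon)^{4/3}}$. I would resolve this by shrinking the time to $T(\de)=Ce^{c'(\log1/\epsilon)^{4/3}}$ with $c'=c/2$, which makes this term $<\de s/2$. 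The constant $c$ in the statement is unspecified, so this is consistent with it.
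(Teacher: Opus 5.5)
Your bootstrap-plus-Duhamel argument for the actions is the paper's argument. The paper confines the orbit to $\matP_{\de/2}$ rather than $\matP_{3\de/8}$, which makes no difference.

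For the angles your route genuinely differs, and it is the one that actually proves the stated estimate. Since $\dot\f=\om_{fin}(y)+\del_yR_{fin}$, the paper's bound $\|\f(T)-\f_0-\om_{fin}(y_0)T\|_{\ell^\io}\le\int_0^T\|\del_yR_{fin}\|_{\ell^\io}\,dt$ drops the term $\int_0^T(\om_{fin}(y(t))-\om_{fin}(y_0))\,dt$. What the paper's bound really controls is $\f(T)-\f_0-\int_0^T\om_{fin}(y(t))\,dt$, i.e.\ your first option of comparing with the frequency along the orbit.

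Your treatment of the missing term is sound:
\begin{itemize}
\item Use the Duhamel bound $\|y(\tau)-y_0\|_{\ell^\io_q}\le\frac{\de}{2}\tau\|R_{fin}\|_{\de/2,s/2}$. Your phrase ``$y$-drift of size $\de^2e^{-c(\log 1/\epsilon)^{4/3}}t$'' should read $\frac{\de}{2}e^{-c(\log 1/\epsilon)^{4/3}}t$, which is what your next line in fact uses.
\item Use the Lipschitz constant $L\le\|Q\|+O(s\|Z_N\|_{\de/2,s/2}/\de)=O(1)$. This follows from Lemma~\ref{cau} together with $\|Z_N\|_{\de/2,s/2}\le 2\|R\|_{\de,s}\le C\e\de$.
\item Integrating gives a contribution $L\frac{\de}{4}t^2e^{-c(\log 1/\epsilon)^{4/3}}$. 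This is below $\frac{\de s}{2}$ for $t<\sqrt{2s/L}\,e^{(c/2)(\log 1/\epsilon)^{4/3}}$.
\end{itemize}

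Halving the exponent is legitimate. The bound $\|R_{fin}\|<e^{-c(\log 1/\epsilon)^{4/3}}$ remains true with $c/2$ in place of $c$, so the statement holds literally with the smaller constant. The action estimate survives on the full scale, and it is the only part used in Theorem~\ref{stimadeitempivera}. The trade-off is this: the paper's shortcut leaves the angle estimate unjustified, while yours costs the factor $1/2$ in the exponent. That factor is the natural price of comparing with the fixed frequency $\om_{fin}(y_0)$ rather than the frequency along the orbit.

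One imprecision you share with the paper concerns the scale $Ce^{c(\log 1/\epsilon)^{4/3}}$. There, the bounds $\frac{\de}{2}t\|R_{fin}\|<\de^2$ and $\frac{s}{2}t\|R_{fin}\|<\frac{\de s}{2}$ need $C<\de$, not merely $C$ small. This is harmless. Since $\epsilon\le C\e\de$, one has $(\log 1/\epsilon)^{4/3}\gg\log(1/\de)$ for $\e\de$ small, so the factor $\de$ can be absorbed by slightly lowering $c$. With your halved exponent this absorption is automatic.
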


\prova
We know that $(\f(t),y(t))$ are well defined and belong to $\matP_{\de/2}$ at least for short time.
Let $T$ be such that $(\f(t),y(t))\in\matP_{\de/2}$ for all $|t|<T$. By Duhamel formula we obtain
\[
\begin{aligned}
&\| y(T) - y_0\|_{\ell^\io_q} \le \int_0^T \| \del_\f {H_{fin}}(\f(t),y(t))\|_{\ell^\io_q}dt \le
T \frac{\de}{2} \|R_{fin}\|_{\de/2,s/2} \\
&\|\f(T) - \f_0 - \om_{fin}(y_0)t\|_{\ell^\io}  \le \int_0^T \| \del_y {R_{fin}}(\f(t),y(t))\|_{\ell^\io}dt 
\le T \frac{s}{2} \|R_{fin}\|_{\de/2,s/2}.
\end{aligned}
\]
so the assertion follows.
\EP

Note that, for fixed $\e$, one has $T(\de)\to+\io$ as $\de\to0^+$.
Finally, we have

\begin{lemma}\label{stimadeitempifin}
For any $(\theta_0,I_0)\in \NN_{\de/16}(\matT_\e(I_0))$
 the hamiltonian flow $\Phi_{H}^t$ is well defined remains in $\NN_{\de}(\matT_\e(I_0))$ 
for all $|t|< T(\de)$.
\end{lemma}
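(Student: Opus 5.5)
The strategy is to transport the statement of Lemma \ref{stimadeitempi}, which lives in the normal form coordinates $(\f,y)$, back to the original coordinates $(\theta,I)$ via the composite map $\Upsilon_N=\Upsilon\circ\Psi_N$ of Theorem \ref{birkof}, here with the optimized choice of $N$ producing $H_{fin}$. Write $\Upsilon_{fin}:\matP_{\de/2}\to\matU_\de$ for this composite. Three facts must be established.
\begin{enumerate}
\item \emph{Image.} $\Upsilon_{fin}(\matP_{\de/4})$ contains $\NN_{\de/16}(\matT_\e(I_0))$.
\item \emph{Conjugation.} $\Upsilon_{fin}$ conjugates the flows: $\Phi^t_H\circ\Upsilon_{fin}=\Upsilon_{fin}\circ\Phi^t_{H_{fin}}$ as long as the right-hand side is defined.
\item \emph{Distance control.} $\Upsilon_{fin}$ maps $\matP_{\de/2}$ into $\NN_\de(\matT_\e(I_0))$.
\end{enumerate}
Given these, fix $(\theta_0,I_0)\in\NN_{\de/16}$ and take its preimage $(\f_0,y_0)\in\matP_{\de/4}$. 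Lemma \ref{stimadeitempi} gives $\|y(t)\|_q\le\|y_0\|_q+\de^2<\de/4+\de^2<\de/2$ for $|t|<T(\de)$, so the orbit stays in $\matP_{\de/2}$. Its image $\Upsilon_{fin}(\f(t),y(t))$ is then a well-defined orbit of $H$, and it lies in $\NN_\de$ by fact 3. Well-posedness of $\Phi^t_H$ on this time interval comes directly from the conjugation together with uniqueness of the solution of the Hamiltonian equations, since $X_H$ is locally Lipschitz on $\matP_r$ by Lemma \ref{cau}.

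Fact 2 is the standard consequence of symplecticity: $\Upsilon$ is symplectic by Lemma \ref{sympatica}, and each $\Phi^1_{S_k}$ is symplectic by Lemma \ref{hamflow}, so Hamiltonian vector fields are pushed forward to Hamiltonian vector fields, $X_{H\circ\Upsilon_{fin}}=\Upsilon_{fin}^*X_H$. Under the weak symplectic structure this still holds, because by \eqref{unicop} the Hamiltonian vector field is determined uniquely.

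Facts 1 and 3 are quantitative estimates on how far $\Psi_N$ and $\Upsilon$ are from the identity.
\begin{itemize}
\item For $\Psi_N$: by \eqref{stobound}, each $\Phi^1_{S_k}$ moves the $y$ component by at most $C\de N\|S_k\|_{\de_k,s_k}$ and the $\f$ component by at most $sN\|S_k\|$. By Lemma \ref{homo} and the bounds of Lemma \ref{itero} these are summable and small, so the total displacement is $\ll\de$ in $y$.
\item For $\Upsilon$: by \eqref{primo}, a point $(\f,y)$ lands at distance at most $(1+C\e)\|y\|_q$ from the torus point $u(\f)$. This uses the Neumann series bound for $(\uno+d\al)^{-T}$, which follows from \eqref{staltribound}.
\end{itemize}
Combining the two gives fact 3: points of $\matP_{\de/2}$ map into $\NN_{(1+C\e)(\de/2+o(\de))}\subset\NN_\de$. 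For fact 1, the same near-identity estimates applied to the inverse map $\Upsilon_{fin}^{-1}$ (the flows are inverted by reversing time, and $\Upsilon^{-1}$ is explicit from \eqref{primo}) show that a point at distance $<\de/16$ from the torus has preimage with $\|y\|_q<(1+C\e)\de/16+o(\de)<\de/4$.

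The main obstacle is fact 1, specifically controlling the preimage under $\Upsilon$. For $(\theta,I)$ with $\dist((\theta,I),\matT_\e)<\de/16$, one must solve $\theta=\f+\al(\f)$ for $\f$, then set $y=(\uno+d\al(\f))^T[I-I_0-A(\f)]$ and bound it in $\ell^\io_q$. Here the distance to the torus is attained at some point $u(\f')$ with $\f'\neq\f$ in general. The plan is to use the Lipschitz bounds $\|dA^T\|$ and $\|d\al\|\le C\e$ from \eqref{staltribound} to estimate $\|A(\f)-A(\f')\|_q\le C\e\,\dist(\f,\f')\le C\e\de$. The factor $16$ should leave enough room to absorb this $O(\e\de)$ error together with the $\Psi_N$ displacement.
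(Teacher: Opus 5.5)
Your proposal is correct and takes the paper's route. The paper's proof is the one-line remark that the lemma follows from Lemma \ref{stimadeitempi} together with the fact that the change of variables $(\theta,I)\mapsto(\f,y)$ is Lipschitz with uniform bounds, and your image, conjugation and distance-control facts unpack exactly that remark.

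One small citation fix: the estimate $\|A(\f)-A(\f')\|_{\ell^\io_q}\le C\e\,\dist(\f,\f')$ needs $dA\in\MM(\ell^\io,\ell^\io_q)$. That bound comes from the Cauchy estimate of Lemma \ref{cau} applied to \eqref{stibound}, not from the bound on the transpose $dA^T$ in \eqref{staltribound}.
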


\prova
It follows from Lemma \ref{stimadeitempi} and the fact that the change of variables $(\theta,I)\mapsto (\f,y)$
is Lipschitz with uniform bounds.
\EP

To conclude the proof of Theorem \ref{stimadeitempivera} it suffices to notice that,
since $\epsilon = \| R\|_{\de,s} \le C\e\de$, then 
\[
T(\de) \ge  C_1 e^{C_2 ({\log(\frac{1}{\e\de})})^{4/3}}
\]
for appropriate $C_1,C_2$.

\appendix

\zerarcounters 
\section{Analytic differential forms} 
\label{forme} 

Given a sequence space $\ttV$ we denote $\LL^n(\ttV)$ the set of $n$-linear forms
\[
F : \ttV^n \to \RRR
\]
which can be represented as $n$-tensors, i.e.
\[
F[v^{(1)},\ldots,v^{(n)}]=\sum_{j_1,\ldots,j_n\in I_\ttV} F_{j_1,\ldots,j_n} v^{(1)}_{j_1}\cdot\ldots\cdot v^{(n)}_{j_n}.
\]
We denote $\Lambda^n(\ttV)\subset \LL^{n}(\ttV)$ the set of alternate $n$-linear
forms on $\ttV$.

We define the space of $(r,s)$-analytic $n$ differential forms on $\matP$ as
\begin{equation}\label{difforme}
\DD^n_{r,s}(\matP):=\{F:\matP_r\to \Lambda^n(T\matP)\,:\, p\mapsto F(p)\in \Lambda^n(\TT),\ \|F\|_{r,s,\Lambda^n}<\io\}
\end{equation}
where
\[
\Lambda^n:=\Lambda^n(\TT)=\Lambda^{n}(\ell^\io(\RRR)\times\ell^\io_q(\RRR)),
\]
is endowed with the majorant operator norm
\begin{equation}\label{normaforme}
\|F\|_{\Lambda^n}:= \sup_{\substack{v_1,\ldots,v_n \in \TT \\ \|v_i\|=1}} |\und{F}[v_1,\ldots,v_n]|,
\end{equation}
where $\und{F}$ is defined as in \eqref{maggiorante}
and the norm $\|F\|_{r,s,\Lambda^n}$ is as in \eqref{converge!}. 

Given a diffeomorphism $\Phi:\matP_{r'}\to\matP_r$ we define the
usual pull-back of a differential form $F$
as
\begin{equation}\label{pbd}
\Phi^* F[v_1,\ldots,v_n] := F\circ\Phi[d\Phi[v_1],\ldots, d\Phi[v_n] ],
\end{equation}
and bounds in the spirit of \eqref{stobound} can be obtained.

Then we introduce
the standard exterior derivative
\begin{equation}\label{diff}
d: \DD^n_{r,s}(\matP) \longrightarrow \DD^{n+1}_{r',s'}(\matP)
\end{equation}
with $r'<r$, $s'<s$ and the Cauchy estimates \eqref{cauchy} hold with $\ttV=\Lambda^n$.
Given any $F\in \DD^n_{r,s}(\matP)$ we may represent it component-wise as
\begin{equation}\label{espandoespando}
\begin{aligned}
F(\theta,I) &= \sum_{n_1 + n_2 = n}\sum_{\substack{ j_1<\ldots<j_{n_1} \\ i_1< \ldots <i_{n_2}}}
F^{(\vec{\imath},\vec{\jmath})}(\theta,I) dI_{j_1}\wedge \ldots \wedge dI_{j_{n_1}} \wedge d \theta_{i_1}\wedge\ldots \wedge d \theta_{i_{n_2}}\\
&=\sum_{\substack{k\in\NNN^\ZZZ_f \\ \nu\in\ZZZ^\ZZZ_f} }
\sum_{n_1 + n_2 = n}\sum_{\substack{ j_1<\ldots<j_{n_1} \\ i_1< \ldots <i_{n_2}}} F_{k,\nu}^{(\vec{\imath},\vec{\jmath})}
 I^ke^{\ii \theta\cdot \nu}dI_{j_1}\wedge \ldots \wedge dI_{j_{n_1}} \wedge d \theta_{i_1}\wedge\ldots \wedge d \theta_{i_{n_2}}
\end{aligned}
\end{equation}
where we denoted $(\vec{\imath},\vec{\jmath}) = (j_1,\ldots,j_{n_1} , i_1, \ldots ,i_{n_2})$,
With the above notation the $2$-form $\Omega$ defined in \eqref{simpl} is well defined and belongs
to $\DD^2_{r,s}(\matP)$ for any $r,s$. Moreover $\Omega$ is exact, being the differential of the action
$1$-form
\begin{equation}\label{azione}
\lambda  := \sum_{j\in\ZZZ}I_j d\theta_j.
\end{equation}

Given a $(r,s)$-analytic vector field $X$ we define the interior product
\begin{equation}\label{interno}
\begin{aligned}
i_X : \DD^n_{r,s}(\matP) &\longrightarrow \DD^{n-1}_{r,s}(\matP) \\
   F \  &\longmapsto F[X,\cdots]
   \end{aligned}
\end{equation}
and the Lie derivative 
\begin{equation}\label{derivatadilie}
\begin{aligned}
L_X: \DD^n_{r,s}(\matP) & \longrightarrow \DD^n_{r',s'}(\matP) \\
 F \ & \longmapsto \frac{d}{dt}\left. \Big( (\Phi^t_X)^* F \Big)\right|_{t=0}
\end{aligned}
\end{equation}
where $(\Phi^t_X)^* F$ is the pull-back of $F$ via $\Phi^t_X$.

\begin{rmk}\label{cartan}
The choice of the norm in \eqref{difforme} allows us to obtain the Cartan formula
\begin{equation}\label{carnanazzo}
L_X = i_X\circ d + d \circ i_X.
\end{equation}
\end{rmk}

\zerarcounters 
\section{Measure estimates} 
\label{misura} 

Recall that $\om(I)=\om_0 + Q[I]$.
By definition the set $\CCCC$ is $B_{1/2}(\ell^\io_q)\setminus \mathscr{R}(\gamma)$
where

$$
\mathscr{R}(\gamma)=\bigcup_{\nu\in\ZZZ^\ZZZ_f\setminus\{0\}}
\underbrace{\left\{I\in B_{1/2}(\ell^{\infty}_q)\,:\, |\om(I)\cdot\nu|
<\gamma\prod_{j\in\ZZZ}(1+\langle j\rangle^{2}|\nu_j|^{2})^{-(q+1)}\right\}}_{:=\mathscr{R}^{(\nu)}(\gamma)}.
$$

Since $\om$ is continuous w.r.t.~the product topology, then $I\to\om(I)$ is a 
$\mu$-measurable map. This implies that, for all $\nu\in\ZZZ^\ZZZ_f\setminus\{0\}$,  
$\mathscr{R}^{(\nu)}(\gamma)$ is a $\mu$-measurable set.
For simplicity of notation, let us set

$$
\Delta^{(\nu)}:=\prod_{j\in\ZZZ}(1+\langle j\rangle^{2}|\nu_j|^{2})^{-({q}+1)}.
$$

By construction, $I\in\mathscr{R}^{(\nu)}_\rho(\gamma)$ 
if and only if it holds that

\begin{equation}\label{epifania}
   -\gamma\Delta^{(\nu)}-\om_0\cdot\nu<\sum_{j\in\ZZZ} I_j({Q}
   \nu)_j<\gamma\Delta^{(\nu)}-\om_0\cdot\nu;
   \quad ({Q}\nu)_j=\sum_{k\in\ZZZ}{Q}_{jk}\nu_k.
\end{equation}

Note that the sum $({Q}\nu)_j$ is actually finite, since $\nu\in\ZZZ^\ZZZ_f$; 
for the same reason, $\om_0\cdot\nu$ is a finite sum, while the sum on $j$ is, 
in general, not finite. Let $j_0\in\ZZZ$ be such that $\|Q\nu\|_\io=|(Q\nu)_{j_0}|$. 
Such an index exists since $Q\nu\in\ell^\infty(\RRR)$, as $Q$ is a bounded 
operator, and depends on the choice of $\nu\in\ZZZ^\ZZZ_f$, i.e.~$j_0=j_0(\nu)$.

If we set, for brevity,

$$
\mathfrak{a}(I_{j\neq j_0}):=\frac{1}{\|{Q}\nu\|_\io}\left(-
\gamma\Delta_{\m_{1,2}}^{(\nu)}-\om_0\cdot\nu-\sum_{\substack{{j\in\ZZZ}\\j\neq j_0}}
 I_j({Q}\nu)_j\right)
$$
and

$$
\mathfrak{b}(I_{j\neq j_0}):=\frac{1}{\|{Q}\nu\|_\io}\left(
\gamma\Delta_{\m_{1,2}}^{(\nu)}-\om_0\cdot\nu-\sum_{\substack{{j\in\ZZZ}\\j\neq j_0}} 
I_j({Q}\nu)_j\right),
$$
we can rewrite \eqref{epifania} as

\begin{equation}\label{letale}
\mathfrak{a}(I_{j\neq j_0})<I_{j_0}<\mathfrak{b}(I_{j\neq j_0}).  
\end{equation}

Note that

\begin{equation}\label{yppy}
  	  \begin{aligned}
 		   \sup_{\nu\ne0}\frac{1}{\|{Q}\nu\|_\io}\leq\sup_{\nu\ne0}
		   \frac{\|\nu\|_\io}{\|{Q}\nu\|_\io}=\sup_{y\ne0}
		   \frac{\|({Q}^{-1})y\|_\io}{\|y\|_\io}=\|{{Q}}^{-1}\|_{\MM(\ell^\io)}.
	   \end{aligned}
\end{equation}

If $I_{j_0}$ satisfies \eqref{letale}, then $I_{j_0}$ belongs to the interval 
$\Omega(I_{j\neq j_0}):=(\mathfrak{a}(I_{j\neq j_0}),\mathfrak{b}(I_{j\neq j_0}))$ 
which has measure equals

\begin{equation}\label{misurino}
    \meas(\Omega(I_{j\neq j_0})):=
    \frac{|\mathfrak{b}(I_{j\neq j_0})-\mathfrak{a}(I_{j\neq j_0})|}
    {\langle j_0\rangle^{-q}}=\frac{2\gamma\Delta^{(\nu)}}
    {\|{Q}\nu\|}\langle j_0\rangle^q.
\end{equation}
where we defined $\de_\nu(\gamma):=
\gamma\Delta_{\m_{1,2}}^{(\nu)}/\rho\|\mathcal{Q}\nu\|$. Then
 $\mathscr{R}^{(\nu)}_{\rho}(\gamma)$ is the 
$j_0$-normal domain

$$
\begin{aligned}
\mathscr{N}_\rho^{(j_0)}&:=\left\{I\in B_{1/2}(\ell^{\infty}_q)
\,:\, I_{j_0}\in\Omega(I_{j\neq j_0})\right\}\\
&=\prod_{\substack{{j\in\ZZZ}\\{j\leq j_0-1}}}
[-\frac{1}{2}\langle j\rangle^{-q},\frac{1}{2}\langle j\rangle^{-q}]\times\Omega(I_{j\neq j_0})
\times \prod_{\substack{{j\in\ZZZ}\\{j\ge j_0+1}}}
[-\frac{1}{2}\langle j\rangle^{-q},\frac{1}{2}\langle j\rangle^{-q}]
\end{aligned}
$$
and the measure estimate follows as in \cite{BMP1,Bjfa}.


\end{document}